\def\!{\mskip-\thinmuskip}
\def\,{\mskip\thinmuskip}
\def\;{\mskip\thickmuskip}
\newtheorem{theorem}{Theorem}[section]
\newtheorem{corollary}[theorem]{Corollary}
\newtheorem{proposition}[theorem]{Proposition}
\theoremstyle{remark}
\newtheorem{remark}[theorem]{Remark}
\newtheorem{example}[theorem]{Example}
\numberwithin{equation}{section}
\author[Michael J.\ Schlosser]{Michael J.\ Schlosser$^*$}
\address{Fakult\"at f\"ur Mathematik, Universit\"at Wien,
Oskar-Morgenstern-Platz~1, A-1090 Vienna, Austria}
\email{michael.schlosser@univie.ac.at}
\urladdr{http://www.mat.univie.ac.at/{\textasciitilde}schlosse}
\thanks{$^*$Partly supported by FWF Austrian Science Fund
grant F50-08 within the SFB
``Algorithmic and enumerative combinatorics''.}
\author[Meesue Yoo]{Meesue Yoo$^{**}$}
\address{Fakult\"at f\"ur Mathematik, Universit\"at Wien,
Oskar-Morgenstern-Platz~1, A-1090 Vienna, Austria}
\email{meesue.yoo@univie.ac.at}
\thanks{$^{**}$Fully supported by FWF Austrian Science Fund
grant F50-08 within the SFB
``Algorithmic and enumerative combinatorics''.}
\title[Elliptic extensions of rook models]{Elliptic extensions
of the alpha-parameter model and the rook model for matchings}
\subjclass[2010]{Primary 05A19;
Secondary 05A15, 05A30, 11B65, 11B73, 11B83}
\keywords{rook numbers, $q$-analogues, elliptic extensions,
alpha-parameter model, matchings}
\newcommand{\ta}{\theta}
\newcommand{\C}{\mathbb C}
\newcommand{\N}{\mathbb N}
\begin{document}

\begin{abstract}
We construct elliptic extensions of the alpha-parameter rook model 
introduced by Goldman and Haglund and of the rook model for
matchings of Haglund and Remmel. In particular, we extend the
product formulas of these models to the elliptic setting.
By specializing the parameter $\alpha$ in our elliptic extension
of the alpha-parameter model and the shape of the Ferrers board
in different ways, we obtain elliptic analogues of the Stirling numbers
of the first kind and of the Abel polynomials, and obtain an
$a,q$-analogue of the matching numbers.
We further generalize the rook theory model for matchings by introducing
$\mathbf l$-lazy graphs which correspond to $\mathbf l$-shifted boards, 
where $\mathbf l$ is a finite vector of positive integers.
The corresponding elliptic product formula generalizes Haglund and Remmel's
product formula for matchings already in the non-elliptic basic case.
\end{abstract}

\maketitle


\section{Introduction}\label{sec:intro}


Since the introduction of rook theory by Kaplansky and Riordan~\cite{KR},
the theory has thrived and developed further by revealing connections 
to, for instance, orthogonal polynomials~\cite{Ge, GJW4}, hypergeometric 
series \cite{H0}, $q$-analogues and permutation statistics~\cite{D, GR},
algebraic geometry~\cite{Ding1, Ding2}, and many more.
Within rook theory itself, various models have been introduced, 
including a $p,q$-analogue of rook numbers~\cite{BR, RW, WaW}, the
$j$-attacking model~\cite{RW}, the matching model~\cite{HR},
the augmented rook model~\cite{McR}
which includes all other models as special cases, etc.
In previous work~\cite{SY0,SY1}, 
the authors have constructed elliptic extensions of the aforementioned 
rook theory models and obtained corresponding product formulas.

In this work, we construct elliptic extensions of two rook theory models:
the alpha parameter rook model introduced by Goldman and
Haglund~\cite{GH} and the
matching model of Haglund and Remmel~\cite{HR}. The alpha-parameter 
model, as explained in \cite{GH}, is a slight generalization of the
$i$-creation model. It connects to several other combinatorial models,
including polynomial sequences of binomial type, permutations of
multisets, Abel polynomials, Bessel polynomials and matchings, and so on. 
Our elliptic extension lays the foundations for raising those
connections to the elliptic level. 

In our construction of an elliptic analogue of the matching model,
we actually consider a model that generalizes the original model
of Haglund and Remmel already in the non-elliptic, basic case.
In particular, we consider matchings on specific graphs which
we call ``$\mathbf l$-lazy graphs'' with respect
to an $N$-dimensional vector of positive integers,
$\mathbf l=(l_1,l_2,\dots, l_N)$. The original matching model can
be realized from the generalized model by setting
$N=2n-1$ and $\mathbf l=(1,1,\dots, 1)$.
For the new model, we are able to prove a product formula
involving elliptic rook numbers for matchings on $\mathbf l$-lazy graphs,
a result which generalizes the corresponding product formula of
Haglund and Remmel~\cite{HR}.

In Section~\ref{sec:ew} we define elliptic weights and review some of the
elementary identities useful for dealing with them. Section~\ref{sec:alpha}
is devoted to the elliptic extension of the alpha-parameter model,
together with some applications. Finally, Section~\ref{sec:match}
features an elliptic extension of the rook theory of matchings.


\section{Elliptic weights}\label{sec:ew}


In this section, we define the elliptic weights which we utilize 
to weight cells in Ferrers boards. (For the definition of Ferrers boards,
see Section \ref{sec:alpha}). We start by explaining what elliptic 
functions are.

A complex function is called elliptic, if it is a doubly-periodic,
meromorphic function on $\C$. It is well-known that such functions
can be expressed in terms of ratios of theta functions (cf.\ \cite{W}).
We will use the following (multiplicative) notation for theta functions.
First, we define the \emph{modified Jacobi theta function} with argument $x$
and nome $p$ by
$$\theta(x;p)= \prod_{j\ge 0}((1-p^j x)(1- p^{j+1}/x)),\qquad
\theta(x_1,\dots, x_m;p)=\prod_{k=1}^m \theta(x_k;p),$$
where $x,x_1,\dots, x_m\ne 0$, $|p|<1$. 
Further, we define the {\em theta shifted factorial}
(or {\em $q,p$-shifted factorial}) by
\begin{equation*}
(a;q,p)_n = \begin{cases}
\prod^{n-1}_{k=0} \theta (aq^k;p),&\quad n = 1, 2, \ldots\,,\cr
1,&\quad n = 0,\cr
1/\prod^{-n-1}_{k=0} \theta (aq^{n+k};p),&\quad n = -1, -2, \ldots.
\end{cases}
\end{equation*}
We frequently write
\begin{equation*}
(a_1, a_2, \ldots, a_m;q, p)_n = \prod^m_{k=1} (a_k;q,p)_n,
\end{equation*}
for brevity.
Notice that for $p=0$ we have  $\theta (x;0) = 1-x$ and,
hence, $(a;q, 0)_n = (a;q)_n$ is just the usual {\em $q$-shifted factorial}
in base $q$ (cf.\ \cite{GRhyp}).
The parameters $q$ and $p$
in $(a;q,p)_n$ are called the {\em base} and {\em nome}, respectively. 
In analogy to the theories of ordinary and basic hypergeometric series
(cf.\ \cite{GRhyp}) there exists also a (rather young) theory of
hypergeometric series involving theta shifted factorials, namely
of theta, modular, and elliptic hypergeometric series,
see \cite[Chapter~11]{GRhyp}.

The modified Jacobi theta functions satisfy the following
basic properties which are essential in the theory of elliptic
hypergeometric series:
\begin{equation*}\label{tif}
\ta(x;p)=-x\,\ta(1/x;p),
\end{equation*}
\begin{equation}\label{p1id}
\ta(px;p)=-\frac 1x\,\ta(x;p),
\end{equation}
and the {\em addition formula}
\begin{equation}\label{addf}
\ta(xy,x/y,uv,u/v;p)-\ta(xv,x/v,uy,u/y;p)
=\frac uy\,\ta(yv,y/v,xu,x/u;p)
\end{equation}
(cf.\ \cite[p.~451, Example 5]{WhW}).

The three-term relation in \eqref{addf},
containing four variables
and four factors of theta functions in each term, is the ``smallest''
addition formula connecting products of theta functions with general
arguments.
Note that in the theta function $\theta(x;p)$ one cannot let $x\to 0$
(unless one first lets $p\to 0$) since $x$ is an essential singularity.
(For this reason elliptic analogues of
$q$-series identities usually contain many parameters.)

The elliptic identities we shall consider all involve terms
which are elliptic (with the same periods) in all of its parameters.
Spiridonov~\cite{Sp} refers to such multivariate functions as
{\em totally elliptic}; they are by nature
{\em well-poised} and {\em balanced} (see also \cite[Chapter~11]{GRhyp}).

Following the setup used in our earlier paper on elliptic rook
numbers~\cite{SY0}, we define the {\em elliptic weights}
$w_{a,b;q,p}(k)$ and $W_{a,b;q,p}(k)$,
depending on two independent parameters $a$ and $b$, base $q$,
nome $p$, and integer parameter $k$ by
\begin{subequations}
\begin{align}
w_{a,b;q,p}(k)&=\frac{\theta(aq^{2k+1},bq^{k},aq^{k-2}/b;p)}
{\theta(aq^{2k-1},bq^{k+2},aq^k /b;p)}q,\label{def:smallelpwt}\\\intertext{and}
W_{a,b;q,p}(k)&=
\frac{\theta(aq^{1+2k},bq,bq^2,aq^{-1}/b,a/b;p)}
{\theta(aq,bq^{k+1},bq^{k+2},aq^{k-1}/b, aq^k /b;p)}q^k,\label{def:bigelpwt}
\end{align}
\end{subequations}
respectively. It is clear that if $k$ is a positive integer, Equations
\eqref{def:smallelpwt} and \eqref{def:bigelpwt} imply that
\begin{equation}\label{eqn:wrelW}
W_{a,b;q,p}(k)=\prod_{j=1}^k w_{a,b;q,p}(j).
\end{equation}
We refer to the $w_{a,b;q,p}(k)$ as {\em small} weights (these will
correspond to the weights of single squares in the Ferrers boards)
and to the $W_{a,b;q,p}(k)$ as {\em big} weights (these will
correspond to partial columns of a Ferrers board). 
Note that the weights $w_{a,b;q,p}(k)$ and $W_{a,b;q,p}(k)$ also can be
defined for arbitrary (complex) value $k$ which is clear from the definition.

We wil make frequent use of the following two properties:
\begin{subequations}\label{eqn:shifts}
\begin{align}
w_{a,b;q,p}(k+n)&=w_{aq^{2k},bq^k;q,p}(n),\label{wshift}\\
W_{a,b;q,p}(k+n)&=W_{a,b;q,p}(k)\,
W_{aq^{2k},bq^k;q,p}(n).\label{Wshift}
\end{align}
\end{subequations}

\begin{remark}
\begin{enumerate}
 \item  The small weight $w_{a,b;q,p}(k)$ (and so the big one) is indeed
elliptic in its parameters (i.e., totally elliptic).
If we write $q=e^{2\pi i\sigma}$,
$p=e^{2\pi i\tau}$, $a=q^\alpha$ and $b=q^\beta$ with complex $\sigma$,
$\tau$, $\alpha$, $\beta$ and $k$, then the small weight
$w_{a,b;q,p}(k)$ is clearly periodic in $\alpha$ with period $\sigma^{-1}$.
A simple computation involving \eqref{p1id} further shows that
$w_{a,b;q,p}(k)$ is also periodic in $\alpha$ with period $\tau\sigma^{-1}$.
The same applies to $w_{a,b;q,p}(k)$ as a function in $\beta$ (or $k$)
with the same two periods $\sigma^{-1}$ and $\tau\sigma^{-1}$.
\item For $p\to 0$, the small and big weights reduce to
\begin{subequations}\label{def:qwt}
\begin{align}
w_{a,b;q}(k)&=\frac{(1-aq^{2k+1})(1-bq^{k})(1-aq^{k-2}/b)}
{(1-aq^{2k-1})(1-bq^{k+2})(1-aq^k /b)}q,~\text{ and }\label{def:smallqwt}\\
W_{a,b;q}(k)&=
\frac{(1-aq^{1+2k})(1-bq)(1-bq^2)(1-aq^{-1}/b)(1-a/b)}
{(1-aq)(1-bq^{k+1})(1-bq^{k+2})(1-aq^{k-1}/b)(1-aq^k /b)}q^k,\label{def:bigqwt}
\end{align}
\end{subequations}
respectively.
In the $a,b;q$-weights in \eqref{def:qwt},
we may let $b\to 0$ (or $b\to\infty$) to obtain ``$a,0;q$-weights'',
or in short, ``$a;q$-weights'':
\begin{equation*}\label{def:aqwt}
w_{a;q}(k)=\frac{(1-aq^{2k+1})}{(1-aq^{2k-1})}q^{-1},\qquad\text{and}\qquad
W_{a;q}(k)=\frac{(1-aq^{1+2k})}{(1-aq)}q^{-k}.
\end{equation*}
Note that by writing $q=e^{ix}$ and $a=e^{i(2c+1)x}$, $c\in\mathbb N$,
the $a;q$-weights
can be written as quotients of Chebyshev polynomials
of the second kind.

Also, in \eqref{def:qwt}, we may let $a\to 0$ (or $a\to\infty$) to obtain
``$b;q$-weights''.
Importantly, if in \eqref{def:qwt} we first let $b\to 0$ and then
$a\to\infty$ (or, equivalently, first let $a\to 0$ and then $b\to 0$),
we obtain the familiar $q$-weights
\begin{equation*}
w_q(k)=q\qquad\text{and}\qquad W_q(k)=q^k,
\end{equation*}
respectively.
\end{enumerate}
\end{remark}

We also define an elliptic analogue of the
$q$-number $[z]_q=\frac{1-q^z}{1-q}$ by
\begin{equation*}\label{elln}
[z]_{a,b;q,p}=\frac{\theta(q^z, aq^z, bq^2, a/b;p)}
{\theta(q,aq,bq^{z+1},aq^{z-1}/b;p)}.
\end{equation*}
Using the addition formula for theta functions~\eqref{addf}, it is
straightforward to verify that the thus defined elliptic numbers satisfy
\begin{equation}\label{recelln}
[z]_{a,b;q,p} = [z-1]_{a,b;q,p}+W_{a,b;q,p}(z-1).
\end{equation}
In case $z=n$ is a nonnegative integer, \eqref{recelln} constitutes a
recursion which, together with $W_{a,b;q,p}(0)=1$,
uniquely defines any elliptic number
$[n]_{a,b;q,p}$, namely,
$$[n]_{a,b;q,p}= 1+W_{a,b;q,p}(1)+\cdots +W_{a,b;q,p}(n-1).$$
More generally, by \eqref{addf} we have the following useful identity
\begin{equation}\label{recellny}
[z]_{a,b;q,p} = [y]_{a,b;q,p}+W_{a,b;q,p}(y)[z-y]_{aq^{2y},bq^y;q,p}
\end{equation}
which reduces to \eqref{recelln} for $y=z-1$.

\begin{remark}
In \cite{Schl1}, the first author, 
in analogy to the $q$-binomial coefficients 
\begin{equation*}
\begin{bmatrix}n\\k\end{bmatrix}_q
:=\frac{(q^{1+k};q)_{n-k}}
{(q;q)_{n-k}}=\frac{[n]_q!}{[k]_q![n-k]_q!},
\end{equation*}
where $[0]_q!=1$ and $[n]_q!=[n]_q[n-1]_q!$,
defined the elliptic binomial coefficients 
\begin{equation}\label{ellbin}
\begin{bmatrix}n\\k\end{bmatrix}_{a,b;q,p}:=
\frac{(q^{1+k},aq^{1+k},bq^{1+k},aq^{1-k}/b;q,p)_{n-k}}
{(q,aq,bq^{1+2k},aq/b;q,p)_{n-k}}.
\end{equation}
These satisfy a nice recursion 
\begin{align}
&\begin{bmatrix}n+1\\k\end{bmatrix}_{a,b;q,p}=
\begin{bmatrix}n\\k\end{bmatrix}_{a,b;q,p}
+\begin{bmatrix}n\\k-1\end{bmatrix}_{a,b;q,p}
\,W_{aq^{k-1},bq^{2k-2};q,p}(n+1-k)
\qquad \text{for $n,k\in\N_0$},\notag\\
\intertext{with the initial conditions}
&\begin{bmatrix}0\\0\end{bmatrix}_{a,b;q,p}=1,\qquad
\begin{bmatrix}n\\k\end{bmatrix}_{a,b;q,p}=0
\qquad\text{for\/ $n\in\N_0$, and\/
$k\in-\N$ or $k>n$},\label{recw}
\end{align}
where $\N$ and $\N_0$ denote the sets of positive and of nonnegative
integers, respectively.

Combinatorially, the elliptic binomial coefficient
in \eqref{ellbin}
can be interpreted in terms of weighted lattice paths in $\mathbb Z^2$
see \cite{Schl0}.
More precisely, \eqref{ellbin} is the
weighted-area generating function for
paths starting in $(0,0)$ and ending in $(k,n-k)$ composed of unit steps going
north or east only, when the weight of each cell
(with $(s,t)$ as the north-east corner) ``covered'' by the path
is defined to be $w_{aq^{s-1},bq^{2s-2};q,p}(t)$.
Then it can be shown that the sum of weighted areas below the paths 
satisfies the same recursion \eqref{recw} by distinguishing the 
last step of the path whether it is vertical or horizontal. Now,
the elliptic number $[n]_{a,b;q,p}$ is just a short-hand notation for
\begin{equation*}
[n]_{a,b;q,p}=\begin{bmatrix}n\\1\end{bmatrix}_{a,b;q,p},
\end{equation*}
the weighted enumeration of all paths starting in 
$(0,0)$ and ending in $(1,n-1)$.

If we take the limit $p\to 0$, $a\to 0$, and $b\to 0$ (in this order),
then we recover the usual $q$-binomial coefficients, i.e., 
$$\lim_{b\to 0}(\lim_{a\to 0}(\lim_{p\to 0}[z]_{a,b;q,p}))=[z]_q.$$
\end{remark}


\section{Elliptic extension of the alpha-parameter
model}\label{sec:alpha}


In \cite[Section~6]{GH}, Goldman and Haglund
introduced the \emph{alpha-parameter model} which generalizes
the original rook theory \cite{KR} as well as 
the $i$-creation model that they
introduced in the same paper \cite{GH}. In \cite[Section~7]{GH}, 
they also derived $q$-analogues of their results. The purpose of this section
is to extend the alpha-parameter model to the elliptic setting. 
Before we give an explicit description in the elliptic setting,
we review the classical case. 

We consider a \emph{board} to be a finite subset of the
$\mathbb N\times\mathbb N$ grid, and label the columns from left to
right with $1,2,3,\dots$, and the rows from bottom to top with
$1,2,3,\dots$. We use the notation $(i,j)$ to denote the cell in the
$i$-th column from the left and the $j$-th row from the bottom.
For technical reasons, in our proofs, we sometimes find it convenient to
extend the $\mathbb N\times \mathbb N$ grid to $\mathbb N\times \mathbb Z$
where the row below the row $1$ is labeled by $0$ and the row labels 
decrease by $1$ as they go down.

Let $B(b_1, \dots, b_n)$ denote the set of cells 
\begin{displaymath}
B=B(b_1,\dots, b_n)=\{(i,j)~|~ 1\le i\le n,~ 1 \le j\le b_i\}.
\end{displaymath}
Note that $b_i$'s are allowed to be zero.
If a board $B$ can be represented by the set $B(b_1, \dots, b_n)$ for 
some nonnegative integer $b_i$'s, then the board $B$ is called 
a \emph{skyline board}. If in addition those $b_i$'s are nondecreasing, 
that is, $0\le b_1\le \cdots \le b_n$, 
then the board $B=B(b_1,\dots, b_n)$ is called a \emph{Ferrers board}.

Classical rook theory studies the number of way of choosing $k$ cells 
in the given board $B$, denoted by $r_k(B)$, so that no two have
a common coordinate, that is, 
no two cells lie in the same row or in the same column. 
For this, we say that we place $k$-nonattacking rooks on the board.

The \emph{$i$-creation rook placement} can be obtained by the 
following process: first choose the columns to place rooks. Then as 
nonattacking rooks are placed in columns, from left to right,
to the right of each rook, $i$ new rows are created until the end of the row,
strictly above where the rook is placed. Given a Ferrers board $B$,
the \emph{$i$-rook number}, $r_k ^{(i)}(B)$, counts the number of 
$i$-creation rook placements of $k$ rooks on $B$, with $r_0 ^{(i)}(B)=1$. 
Figure \ref{fig:i=1} shows an $(i=1)$-creation rook placement in which 
rooks are denoted by $X$'s.

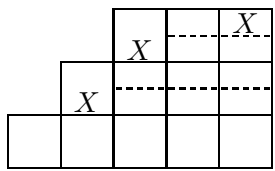
\begin{figure}[ht]
$$
\begin{picture}(100,60)(0,0)
\multiput(0,0)(0,20){2}{\line(1,0){100}}
\multiput(20,40)(0,20){1}{\line(1,0){80}}
\multiput(40,60)(0,20){1}{\line(1,0){60}}
\multiput(0,0)(20,0){1}{\line(0,1){20}}
\multiput(20,0)(20,0){1}{\line(0,1){40}}
\multiput(40,0)(20,0){4}{\line(0,1){60}}
\multiput(41,30)(4,0){15}{\line(1,0){2}}
\multiput(61,50)(4,0){10}{\line(1,0){2}}
\put(25, 21){$X$}
\put(45, 41){$X$}
\put(85, 51){$X$}
\end{picture}$$
\caption{An $i$-creation rook placement for $i=1$.}\label{fig:i=1}
\end{figure}

The $i$-rook number $r_k ^{(i)}(B)$ can be generalized by introducing 
weights to each row of the board. Now we allow to place more than one 
rook in the respective rows, keeping the condition that each column
contains at most one rook. Such placements are called
\emph{file placements} and we use  $\mathcal F_k(B)$ to denote the
set of all $k$-rook file placements in $B$.
Rooks do not create $i$ rows to the right, but we instead weight 
the rows: if there are $u$ rooks in a given row, then the
weight of the row is 
$$
\begin{cases}
1 & \text{ if } ~0\le u\le 1,\\
\alpha(2 \alpha-1)(3\alpha -2)\cdots ((u-1)\alpha -(u-2)),& 
\text{ if } ~u\ge 2.
\end{cases}
$$
The weight of a placement is defined to be the product of the weights
of all the rows and $r_k ^{(\alpha)}(B)$ is defined by the sum of the weights
of all placements $P\in \mathcal F_k(B)$. If $\alpha=0$, then $r_k ^{(0)}(B)$
reduces to the original rook number, and if $\alpha$ is a positive
integer $i$, then $r_k ^{(i)}(B)$ is the $i$-rook number of the
$i$-creation model. The numbers $r_k ^{(\alpha)}(B)$
satisfy the following product
formula, or $\alpha$-factorization theorem (see \cite{GH}):
\begin{equation*}
\prod_{j=1}^n (z+b_j +(j-1)(i-1))=\sum_{k=0}^n r_{n-k} ^{(\alpha)}(B)
\prod_{i=1}^k (z+(i-1)(\alpha-1)).
\end{equation*}

We establish an elliptic analogue of the alpha-parameter model by
assigning elliptic weights to the cells in the board $B$. 
This elliptic analogue contains the $q$-analogue of $r_k ^{(\alpha)}(B)$
that Goldman and Haglund constructed in \cite[Section 7]{GH} as a limit case. 

Let $B$ be a Ferrers board and $P\in \mathcal F_k(B)$ a file placement in $B$.
For each  cell $c\in B$ in $(i,j)$, we define the weight of $c$ to be 
\begin{multline}\label{def:cellwt}
wt_\alpha (c)=\\
\begin{cases}
 1, \hfill\text{ if there is a rook above and in the same column as $c$,}\\
 [(\alpha -1)v(c)+1]_{aq^{2(-j+(\alpha-1)(1-i+r_c(P)))},bq^{-j+(\alpha-1)(1-i+r_c(P))};q,p},
\text{ if $c$ contains a rook,}\\
 W_{aq^{2(-j+(\alpha-1)(1-i+r_c(P)))},bq^{-j+(\alpha-1)(1-i+r_c(P))};q,p}
((\alpha -1)v(c) +1), \hfill\text{ otherwise,}
\end{cases}
\end{multline}
where $v(c)$ is the number of rooks strictly to the left of, and 
in the same row as $c$, and $r_c (P)$ is the number of rooks 
in the north-west region of $c$. 
The weight of the rook placement $P$ is defined to be the product of 
the weights of all cells:
$$wt_\alpha (P)=\prod_{c\in B}wt_\alpha (c).$$
We define an elliptic analogue of $r_k ^{(\alpha)}(B)$ by setting 
\begin{equation*}
r_k ^{(\alpha)}(a,b;q,p;B)=\sum_{P\in\mathcal F_k (B) }wt_\alpha (P).
\end{equation*}
This $r_k ^{(\alpha)}(a,b;q,p;B)$ satisfies the following recursion which 
can be proved by considering the cases whether there is a rook or not 
in the last column of the board.

\begin{proposition}\label{prop:elptrecur}
Let $B$ be a Ferrers board with $l$ columns of height at most $m$,
and $B\cup m$ denote the board obtained by adding the $(l+1)$-st
column of height $m$ to $B$. Then, for any integer $k$, we have 
\begin{multline}\label{eqn:elptrecur}
r_{k+1} ^{(\alpha)}(a,b;q,p;B\cup m)=
[m+(\alpha -1)k]_{aq^{-2(m+(\alpha-1)l)},bq^{-m-(\alpha-1)l};q,p}\,
r_k ^{(\alpha)}(a,b;q,p;B)\\
+W_{aq^{-2(m+(\alpha-1)l)},bq^{-m-(\alpha-1)l};q,p}(m+(\alpha -1)(k+1))\,
r_{k+1} ^{(\alpha)}(a,b;q,p;B),
\end{multline}
assuming that 
\begin{align*}
r_k ^{(\alpha)}(a,b;q,p;B)={}&0\qquad\text{for $k<0$ or $k>l$, and}\\
r_0 ^{(\alpha)}(a,b;q,p;B)={}&1\qquad\text{for $l=0$, i.e.\ for $B$
being the empty board.}
\end{align*}
\end{proposition}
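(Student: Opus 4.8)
The plan is to prove the recursion by conditioning on the configuration of the last (the $(l+1)$-st) column of $B\cup m$, exactly as the statement hints. A file placement $P\in\mathcal F_{k+1}(B\cup m)$ either has no rook in the last column or has exactly one rook in the last column (since file placements allow at most one rook per column). In the first case, restricting $P$ to $B$ gives a placement in $\mathcal F_{k+1}(B)$; in the second case, restricting gives a placement in $\mathcal F_k(B)$. So I would partition $r_{k+1}^{(\alpha)}(a,b;q,p;B\cup m)$ into these two sums and show each reproduces one term of \eqref{eqn:elptrecur} after factoring out the contribution of the cells in the new column.

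The key computation is to evaluate the product of the cell weights $wt_\alpha(c)$ over the $m$ cells $c=(l+1,j)$, $1\le j\le m$, of the new column, holding the rest of the placement fixed. First I would check the arguments appearing in \eqref{def:cellwt}: for a cell in column $i=l+1$, the base parameters shift to $aq^{2(-j+(\alpha-1)(-l+r_c(P)))}$ and $bq^{-j+(\alpha-1)(-l+r_c(P))}$, and I would reconcile these with the shifted bases $aq^{-2(m+(\alpha-1)l)}$, $bq^{-m-(\alpha-1)l}$ appearing in the statement by repeatedly using the shift relations \eqref{wshift} and \eqref{Wshift}. When there is no rook in the new column, every one of its $m$ cells falls into the ``otherwise'' branch, so the column contributes a product of big weights $W(\,\cdot\,)$; telescoping this product via \eqref{Wshift} and keeping track of how $r_c(P)$ varies with $j$ should collapse it to the single big-weight factor $W_{\dots}(m+(\alpha-1)(k+1))$ multiplying $r_{k+1}^{(\alpha)}(a,b;q,p;B)$. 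When there is a rook in the new column, say in cell $(l+1,j_0)$, the $m-j_0$ cells above it contribute $1$, the rook cell contributes an elliptic-number factor $[(\alpha-1)v+1]_{\dots}$, and the $j_0-1$ cells below contribute big weights; summing over the rook position $j_0$ and using the additive recursion \eqref{recelln}, or more generally \eqref{recellny}, should telescope the whole column contribution into the single elliptic-number factor $[m+(\alpha-1)k]_{\dots}$ multiplying $r_k^{(\alpha)}(a,b;q,p;B)$.

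The most delicate bookkeeping — and the step I expect to be the main obstacle — is the tracking of the statistics $v(c)$ and $r_c(P)$ as the column index jumps from $B$ to the new column. Adding a rook in the new column changes $v(c)$ for cells lying in the same row to its right, but since the new column is the rightmost, no cells of $B$ lie to its right, so $v$ on $B$ is unaffected; I would verify carefully that the restriction map is weight-preserving on $B$ (this is where the $(\alpha-1)$ shifts in the base must line up). The quantity $r_c(P)$, the number of rooks northwest of $c$, does change globally when the board grows, and matching the north-west counts of the new column's cells against the global rook count $k$ (respectively $k+1$) is exactly what produces the arguments $m+(\alpha-1)k$ and $m+(\alpha-1)(k+1)$; getting the exponent of $(\alpha-1)$ right here is the crux.

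Finally I would handle the degenerate boundary cases separately: $k+1>l+1$ or $k+1<0$ forces both sides to vanish by the stated conventions, and the empty-board initialization $r_0^{(\alpha)}=1$ anchors the induction. Assembling the two cases gives precisely the two-term right-hand side of \eqref{eqn:elptrecur}, completing the proof.
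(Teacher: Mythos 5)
Your decomposition and key identities coincide with the paper's own proof: it, too, splits on whether the $(l+1)$-st column contains a rook, collapses the no-rook column to the single big weight $W_{aq^{-2(m+(\alpha-1)l)},bq^{-(m+(\alpha-1)l)};q,p}(m+(\alpha-1)(k+1))$ via \eqref{eqn:wrelW} and \eqref{eqn:shifts}, and telescopes the sum over rook positions into the elliptic number $[m+(\alpha-1)k]$ via repeated use of \eqref{recellny}. One detail to correct when you execute the plan: by \eqref{def:cellwt} a cell has weight $1$ precisely when a rook lies \emph{above} it in the same column, so for a rook in cell $(l+1,j_0)$ it is the cells \emph{below} the rook that contribute $1$ and the cells \emph{above} it that contribute big weights --- the opposite of what you wrote; accordingly the paper's telescoping starts with the rook in the top cell (total weight $1$) and the weights grow as the rook moves to lower rows, but this orientation fix does not change the structure of your argument.
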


\begin{proof}
Let us compute the coefficient of $r_{k+1} ^{(\alpha)}(a,b;q,p;B)$ which
corresponds to the case when there is no rook in the last column.
To see how the computation goes, we first assume that there is only
one row in $B$ containing all $k+1$ rooks in the row and let the row
coordinate be $j$. Then the product of the weight of the cells
in the last column would be,
from the top, 
\begin{multline*}
W_{aq^{-2(m+(\alpha -1)l)},bq^{-(m+(\alpha-1)l)};q,p}(1) \cdot
W_{aq^{-2(m-1+(\alpha -1)l)},bq^{-(m-1+(\alpha-1)l)};q,p}(1) \cdots\\
\times W_{aq^{-2(j+1+(\alpha -1)l)},bq^{-(j+1+(\alpha-1)l)};q,p}(1)
\cdot W_{aq^{-2(j+(\alpha -1)l)},bq^{-(j+(\alpha-1)l)};q,p}((\alpha-1)(k+1)+1)\\
\times W_{aq^{-2(j-1+(\alpha -1)(l-k-1))},bq^{-(j-1+(\alpha-1)(l-k-1))};q,p}(1)
\cdot W_{aq^{-2(j-2+(\alpha -1)(l-k-1))},bq^{-(j-2+(\alpha-1)(l-k-1))};q,p}(1)\\
\times \cdots \times
W_{aq^{-2(1+(\alpha -1)(l-k-1))},bq^{-(1+(\alpha-1)(l-k-1))};q,p}(1).\qquad\qquad
\end{multline*}
By applying the identities \eqref{eqn:wrelW} and \eqref{eqn:shifts}, 
it is not very hard to see that the above product eventually reduces to 
$$W_{aq^{-2(m+(\alpha -1)l)},bq^{-(m+(\alpha-1)l)};q,p}(m+(\alpha-1)(k+1)).$$
Even if there are several rows containing rooks, say the $j_i$th row
contains $v_{j_i}$ rooks, for $1\le i\le k+1$ and $\sum_{i=1}^k v_{j_i}=k+1$,
by the way we defined the weights of the cells, the product of the
weight of the cells in the last column eventually reduces to 
$$W_{aq^{-2(m+(\alpha -1)l)},bq^{-(m+(\alpha-1)l)};q,p}(m+(\alpha-1)(k+1)),$$
which is the coefficient of $r_{k+1} ^{(\alpha)}(a,b;q,p;B)$ in
\eqref{eqn:elptrecur}. 

Now we consider the case when there is a rook in the last column and
sum up the weights coming from all the possible rook placements.
Let us assume that the rows $j_1,j_2,\dots, j_k$,
from the top, contain $v_{j_i}$ rooks respectively, for $i=1,\dots, k$,
and $\sum_{i=1}^k v_{j_i}=k$. 
If we place the rook in the top cell of the last column, 
then the top cell containing the rook has the weight
$[1]_{aq^{2(-m+(\alpha-1)(-l))},bq^{-m+(\alpha-1)(-l)};q,p}$,
which is just $1$, and all the cells below, being below a rook,
have weight $1$.
If we place the rook in the second top cell, then the top cell itself
has the weight 
$W_{aq^{-2(m+(\alpha -1)l)},bq^{-(m+(\alpha-1)l)};q,p}(1)$ and the other cells have 
weight $1$. If we place the rook in the third top row, then the weight
of the placement would be 
\begin{align*}
 &W_{aq^{-2(m+(\alpha -1)l)},bq^{-(m+(\alpha-1)l)};q,p}(1)\cdot
W_{aq^{-2(m-1+(\alpha -1)l)},bq^{-(m-1+(\alpha-1)l)};q,p}(1)\\
 &=W_{aq^{-2(m+(\alpha -1)l)},bq^{-(m+(\alpha-1)l)};q,p}(2),
\end{align*}
by application of \eqref{Wshift}. Continuing in this way, we can see that
the weights grow as we place rooks in lower rows of the last column.
If we place the rook in the row $j_1$, containing $v_{j_1}$ rooks to
the left, the weight of the placement is 
$$
 W_{aq^{-2(m+(\alpha -1)l)},bq^{-(m+(\alpha-1)l)};q,p}(m-j_1)
 [(\alpha -1)v_{j_1}+1]_{aq^{-2(j_1+(\alpha -1)l)},bq^{-(j_1+(\alpha-1)l)};q,p}.
$$
This is the result after simplifying the product of the weights coming
from the top $m-j_1$ cells using \eqref{Wshift}.
The sum of the rook placements so far is 
\begin{align*}
&[m-j_1]_{aq^{-2(m+(\alpha -1)l)},bq^{-(m+(\alpha-1)l)};q,p}+ \\
&~ W_{aq^{-2(m+(\alpha -1)l)},bq^{-(m+(\alpha-1)l)};q,p}(m-j_1)
[(\alpha -1)v_{j_1}+1]_{aq^{-2(j_1+(\alpha -1)l)},bq^{-(j_1+(\alpha-1)l)};q,p}\\
&= [m-j_1+(\alpha -1)v_{j_1}+1]_{aq^{-2(m+(\alpha -1)l)},bq^{-(m+(\alpha-1)l)};q,p},
\end{align*}
by applying \eqref{recellny}.
The placement of the last rook in row $j_1 -1$ has the weight 
\begin{align*}
&  \underbrace{W_{aq^{-2(m+(\alpha -1)l)},bq^{-(m+(\alpha-1)l)};q,p}
(m-j_1)}_{\text{product of weights of top $m-j_1$ cells}}
\cdot \underbrace{W_{aq^{-2(j_1+(\alpha -1)l)},bq^{-(j_1+(\alpha-1)l)};q,p}
((\alpha-1)v_{j_1}+1)}_{\text{weight of the cell in row $j_1$}}\\
&= ~W_{aq^{-2(m+(\alpha -1)l)},bq^{-(m+(\alpha-1)l)};q,p}(m-j_1 +(\alpha -1)v_{j_1}+1),
\end{align*}
by \eqref{eqn:shifts}, and hence, the sum of the weights of the
rook placements so far is 
$$[m-j_1+(\alpha -1)v_{j_1}+2]_{aq^{-2(m+(\alpha -1)l)},bq^{-(m+(\alpha-1)l)};q,p}.$$
We can see that placing a rook to the right of $v_{j_i}$ rooks contribute 
$(\alpha -1)v_{j_i}$ discrepancy and the elliptic number continues to increase 
as we place the rook in lower rows. We continue in this way. If we place
the rook in the bottom most cell in the last column,
the weight of the placement is 
\begin{align*}
 & \quad W_{aq^{-2(m+(\alpha -1)l)},bq^{-(m+(\alpha-1)l)};q,p}(m-2)\cdot 
 W_{aq^{-2(2+(\alpha -1)(l-k))},bq^{-(2+(\alpha-1)(l-k))};q,p}(1)\\
 &= W_{aq^{-2(m+(\alpha -1)l)},bq^{-(m+(\alpha-1)l)};q,p}(m+k(\alpha -1)-1).
\end{align*}
Adding this weight to the weighted sum of the rook placements so far,
which is
$$[m+k(\alpha -1)-1]_{aq^{-2(m+(\alpha -1)l)},bq^{-(m+(\alpha-1)l)};q,p},$$
gives 
$$[m+k(\alpha -1)]_{aq^{-2(m+(\alpha -1)l)},bq^{-(m+(\alpha-1)l)};q,p},$$
which is the coefficient of $r_k ^{(\alpha)}(a,b;q,p;B)$.
\end{proof}

We can now prove an elliptic analogue of 
the $\alpha$-factorization theorem.

\begin{theorem}\label{thm:ellalpha}
For any Ferrers board $B=B(b_1,b_2,\dots, b_n)$, we have 
\begin{multline}\label{eqn:ellalphaprod}
\qquad\prod_{j=1}^n [z+b_j+(j-1)
(\alpha -1)]_{aq^{-2(b_j+(j-1)(\alpha-1))},bq^{-(b_j+(j-1)(\alpha-1))};q,p}\\
=\sum_{k=0}^n r_{n-k}^{(\alpha)} (a,b;q,p;B)
\prod_{i=1}^k[z+(i-1)
(\alpha -1)]_{aq^{-2(i-1)(\alpha-1)},bq^{-(i-1)(\alpha-1)};q,p}. \qquad
\end{multline} 
\end{theorem}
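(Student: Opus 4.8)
The plan is to prove the factorization theorem \eqref{eqn:ellalphaprod} by induction on $n$, the number of columns of the Ferrers board, using the recursion established in Proposition~\ref{prop:elptrecur} as the engine. The base case $n=0$ is the empty board, where both sides reduce to $1$ (the empty product on the left, and the single term $k=0$ with $r_0^{(\alpha)}=1$ on the right). For the inductive step, I would write $B=B(b_1,\dots,b_n)$ and let $B'=B(b_1,\dots,b_{n-1})$ be the board with its last column removed, so that $B=B'\cup b_n$ in the notation of the proposition. The idea is to take the factorization identity assumed for $B'$ (which has $n-1$ columns) and multiply both sides by the single new factor corresponding to the $n$-th column, then massage the right-hand side into the claimed form for $B$ by invoking the recursion \eqref{eqn:elptrecur}.

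The key computation is as follows. By the inductive hypothesis, the product over $j=1,\dots,n-1$ on the left equals $\sum_{k}r_{n-1-k}^{(\alpha)}(a,b;q,p;B')\prod_{i=1}^k[z+(i-1)(\alpha-1)]_{\dots}$. Multiplying through by the $n$-th factor $[z+b_n+(n-1)(\alpha-1)]_{aq^{-2(b_n+(n-1)(\alpha-1))},bq^{-(b_n+(n-1)(\alpha-1))};q,p}$ produces the full left-hand side of \eqref{eqn:ellalphaprod}. On the right, I would then need to express this extra factor, acting on each summand, in a way that reassembles the coefficients $r_{n-k}^{(\alpha)}(a,b;q,p;B)$. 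This is precisely where the recursion comes in: the key identity \eqref{recellny}, rewritten with an appropriate choice of the splitting parameter $y$, should let me split $[z+b_n+(n-1)(\alpha-1)]$ against the partial product $\prod_{i=1}^k[z+(i-1)(\alpha-1)]$, generating both a term proportional to $\prod_{i=1}^k$ and a term proportional to $\prod_{i=1}^{k+1}$ carrying an extra big-weight factor. Matching these two pieces against the two terms of the recursion \eqref{eqn:elptrecur} (with $l=n-1$ and $m=b_n$) and reindexing the sum, I expect the coefficients to collapse exactly into $r_{n-k}^{(\alpha)}(a,b;q,p;B)$.

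The main obstacle I anticipate is bookkeeping the base-and-nome shifts correctly: the elliptic numbers and weights carry parameters of the form $aq^{-2(\cdots)}$ and $bq^{-(\cdots)}$ whose exponents depend on $b_j$, $j$, $i$, $\alpha$, $l$, and the rook count, and the shift rules \eqref{wshift}, \eqref{Wshift} must be applied so that the arguments of $[z+\cdots]_{\cdots}$ and $W_{\cdots}(\cdots)$ line up \emph{exactly} between the recursion and the factorization. In particular, when I split the new factor via \eqref{recellny}, the residual elliptic number on the right will appear with shifted parameters $aq^{2y},bq^y$, and I must verify that these shifts are precisely compensated by the parameter shifts built into the definition of $r_{n-k}^{(\alpha)}$ through the cell weights \eqref{def:cellwt}. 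Checking that the weight coefficient $W_{aq^{-2(m+(\alpha-1)l)},bq^{-(m+(\alpha-1)l)};q,p}(m+(\alpha-1)(k+1))$ appearing in \eqref{eqn:elptrecur} coincides with the big-weight factor generated by the splitting is the crucial consistency check, and this is where \eqref{addf} (via \eqref{recellny}) does the real work.

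The remaining steps are then routine: after the splitting and reindexing, I collect the coefficient of each partial product $\prod_{i=1}^k[z+(i-1)(\alpha-1)]_{\dots}$ and observe that it is exactly the right-hand side of the recursion \eqref{eqn:elptrecur}, hence equal to $r_{n-k}^{(\alpha)}(a,b;q,p;B)$. The boundary conventions $r_k^{(\alpha)}=0$ for $k<0$ or $k>l$ stated in Proposition~\ref{prop:elptrecur} handle the extremal terms of the sum automatically, so no separate edge-case analysis is needed. This completes the induction and establishes \eqref{eqn:ellalphaprod}.
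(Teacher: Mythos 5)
Your proof is correct, but it takes a genuinely different route from the paper. The paper proves Theorem~\ref{thm:ellalpha} combinatorially: it attaches $z$ rows of width $n$ below $B$ to form an extended board $B_z$ and evaluates $\sum_{P\in\mathcal F_n(B_z)}wt_\alpha(P)$ in two ways --- columnwise (giving the left-hand side, by an argument parallel to, but not literally invoking, Proposition~\ref{prop:elptrecur}) and by splitting each placement into its part in $B$ and its part in $B_z-B$ (giving the right-hand side). You instead run a purely algebraic induction on the number of columns, using the statement of Proposition~\ref{prop:elptrecur} as a black box together with the splitting identity \eqref{recellny}. Your plan does close: with $l=n-1$, $m=b_n$, the choice of splitting parameter is $y=b_n+(\alpha-1)(n-1-k)$, for which \eqref{recellny} gives
\begin{multline*}
[z+b_n+(n-1)(\alpha-1)]_{A,B;q,p}=[b_n+(\alpha-1)(n-1-k)]_{A,B;q,p}\\
+W_{A,B;q,p}\bigl(b_n+(\alpha-1)(n-1-k)\bigr)\,
[z+k(\alpha-1)]_{aq^{-2k(\alpha-1)},bq^{-k(\alpha-1)};q,p},
\end{multline*}
where $A=aq^{-2(b_n+(n-1)(\alpha-1))}$ and $B=bq^{-(b_n+(n-1)(\alpha-1))}$; the shifted parameters $Aq^{2y}=aq^{-2k(\alpha-1)}$, $Bq^{y}=bq^{-k(\alpha-1)}$ land exactly on the $(k+1)$-st factor of the partial product, and the two pieces match the two terms of \eqref{eqn:elptrecur} after reindexing, with the boundary conventions absorbing the extremal terms as you say. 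What each approach buys: yours isolates all analytic content in the single application of the addition formula \eqref{addf} (via \eqref{recellny}) and requires no new combinatorial reasoning beyond the recursion, while the paper's double count gives a combinatorial interpretation of the full identity --- the products $\prod_{i=1}^k[z+(i-1)(\alpha-1)]$ are realized as weight sums of actual rook placements in the extended rows --- which also explains why both sides are polynomials in the same elliptic building blocks and extends naturally to the matching model of Section~\ref{sec:match}.
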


\begin{proof}
Let us extend the board by attaching $z$ rows of width $n$ below the
board $B$, denoted by $B_z$, and compute 
$$\sum_{P\in\mathcal F_n (B_z)}wt_\alpha (P)$$
in two different ways. The left-hand side of \eqref{eqn:ellalphaprod}
is the result of computing the above weight sum columnwise, and the
right-hand side can be obtained by computing the weight of the cells
in $B$ and and the cells in the extended part separately. 

If we place rooks columnwise starting from the first column, then by
using a similar argument as in Proposition~\ref{prop:elptrecur},
it is not hard to see that all the possible rook placements
in the $j$-th column contribute to the weight sum 
$[z+b_j+(j-1)(\alpha -1)]_{aq^{-2(b_j+(j-1)(\alpha-1))},bq^{-(b_j+(j-1)(\alpha-1))};q,p}$
and the entire weight sum is the product of all those factors. 

On the other hand, consider the way that we choose a placement
$P\in \mathcal F_{n-k}(B)$ and 
extend it to an $n$-rook placement in $F_{n}(B_z)$ by placing $k$ rooks 
in $B_z -B$. Then the weighted sum of all such rook placements is 
\begin{align*}
&\sum_{\substack{P'\in\mathcal F_n (B_z)\\ P' \cap B=P}}wt_\alpha (P')\\
&=\sum_{\substack{P'\in \mathcal F_n (B_z)\\ P'\cap B=P}}wt_\alpha (P)\cdot
wt_\alpha (P'\cap (B_z -B))\\
&= wt_\alpha (P) \prod_{i=1}^k[z+(i-1)
(\alpha -1)]_{aq^{-2(i-1)(\alpha-1)},bq^{-(i-1)(\alpha-1)};q,p},
\end{align*}
where the last line comes from the weight computation of placing
$k$ rooks columnwise.
We sum the above weight over all the file placements in $F_{n-k}(B)$:
\begin{align*}
& \sum_{P\in \mathcal F_{n-k}(B)}\left(wt_\alpha (P)
[z+(i-1)(\alpha -1)]_{aq^{-2(i-1)(\alpha-1)},bq^{-(i-1)(\alpha-1)};q,p} \right)\\
&=\left( \sum_{P\in \mathcal F_{n-k}(B)} wt_\alpha (P)  \right)
[z+(i-1)(\alpha -1)]_{aq^{-2(i-1)(\alpha-1)},bq^{-(i-1)(\alpha-1)};q,p}\\
&= r_{n-k}^{(\alpha)}(a,b;q,p;B)
[z+(i-1)(\alpha -1)]_{aq^{-2(i-1)(\alpha-1)},bq^{-(i-1)(\alpha-1)};q,p}.
\end{align*}
The right-hand side of \eqref{eqn:ellalphaprod} is the result of
summing the above weights over all $k$,
for $0\le k \le n$. This completes the proof.
\end{proof}

\begin{remark}
\begin{enumerate}
 \item If we take the limit $p\to 0$, $a\to 0$, and then $b\to 0$
(or, $p\to 0$, $b\to 0$, then $a\to \infty$), then $r_k ^{(\alpha)}(0,0;q,0;B)$
becomes $R_k ^{(\alpha)}(B)$ which is a $q$-analogue of $r_k ^{(\alpha)}(B)$
defined by Goldman and Haglund \cite[Section 7]{GH}. Thus,
Theorem \ref{thm:ellalpha} gives a $q$-analogue of the
$\alpha$-factorization theorem:
 \begin{align*}
&\prod_{j=1}^n [z+b_j +(j-1)(\alpha -1)]_q\notag\\
& =\sum_{k=0}^n R_{k}^{(\alpha)}(B) [z]_q [z+(\alpha -1)]_q\cdots
[z+(n-k-1)(\alpha -1)]_q.
 \end{align*}
\item
In \cite{SY1}, we obtained the elliptic analogue of the
$\alpha$-factorization theorem \eqref{eqn:ellalphaprod} by a different
approach. There we introduced an elliptic analogue of a generalized
rook model of Miceli and Remmel \cite{McR} utilizing augmented rook boards.
The alpha-parameter model can be obatined from it
by specializing some parameters. 
For full details, see \cite{SY1}.
\end{enumerate}
 
\end{remark}


\subsection{The $\alpha=1$ case}


Note that the number of file placements $f_k (B):=|\mathcal F_k (B)|$
is called a \emph{file number}. An elliptic analogue of the file number
has been defined in \cite{SY0} by assigning weights $w_{a,b;q,p}(1-j)$
to the cell in $(i,j)$ which are neither below nor contain any rook in 
$P\in \mathcal F_k(B)$. 
In the definition of $wt_\alpha (c)$ in \eqref{def:cellwt}, if we 
set $\alpha=1$, then 
\begin{equation*}
wt_{\alpha=1} (c)=
\begin{cases}
 1, &\text{ if there is a rook above or in $c$,}\\
 W_{aq^{-2j},bq^{-j};q,p}(1), &\text{ otherwise.}
\end{cases}
\end{equation*}
Since $W_{aq^{-2j},bq^{-j};q,p}(1)=w_{aq^{-2j},bq^{-j};q,p}(1)=w_{a,b;q,p}(1-j)$,
$r_k ^{(1)}(a,b;q,p;B)$ coincides with the elliptic analogue of the file number 
$f_k(a,b;q,p;B)$, defined in \cite[Section 5]{SY0}. In fact, $f_k(a,b;q,p;B)$ 
is defined for any skyline board. 


\begin{example}[Elliptic Stirling number of the first kind]
Consider the staircase shape board $St_n =B(0,1,2,\dots, n-1)$. The file 
placement of $n-k$ rooks in $St_n$ counts the number of permutations 
of $\{ 1,2,\dots , n\}$ with $k$ cycles, or the signless Stirling 
numbers of the first kind, denoted by $c(n,k)$. Hence,
$r_{n-k}^{(1)}(a,b;q,p;St_n)$
can be defined as an elliptic analogue of $c(n,k)$.  
Let us use the notation $c_{a,b;q,p}(n,k):=r_{n-k}^{(1)}(a,b;q,p;St_n)$. 
The recurrence relation in Proposition \ref{prop:elptrecur} gives 
a recurrence relation for $c_{a,b;q,p}(n,k)$:
\begin{equation}\label{exp:stirling1}
c_{a,b;q,p}(n+1,k)=[n]_{aq^{-2n},bq^{-n}}c_{a,b;q,p}(n,k)+
W_{aq^{-2n},bq^{-n}}(n)c_{a,b;q,p}(n,k-1),
\end{equation}
with the initial conditions $c_{a,b;q,p}(0,0)=1$ and
$c_{a,b;q,p}(n,k)=0$ for $k<0$ or $k>n$.

Furthermore, if we consider the truncated staircase board
$St_n ^{(r)}=B(b_1,\dots, b_n)$ with 
$b_i=0$ for $i=1,\dots, r$ and $b_i=i-1$ for $i=r+1,\dots, n$,
then $f_{n-k}(St_n ^{(r)})$
equals the number of permutations with $k$ cycles such that
the first $r$ numbers $1,2,\dots, r$
 are in distinct cycles; these are called the
\emph{$r$-restricted Stirling number of the first kind}. 
One can now define
 $c_{a,b;q,p}^{(r)}(n,k):=r_{n-k}^{(1)}(a,b;q,p;St_n ^{(r)})$
as an elliptic analogue of the $r$-restricted
Stirling numbers of 
 the first kind. They satisfy the same recursion \eqref{exp:stirling1}
but with the initial conditions 
 $c_{a,b;q,p}^{(r)}(r-1,r-1)=1$ and $c_{a,b;q,p}^{(r)}(n,k)=0$ for $k<r-1$ or $k>n$.
 For details of the correspondence between file placements and permutations 
 with certain number of cycles, see \cite[Subsections~ 5.1 and 5.2]{SY0}.
\end{example}


\begin{example}[Elliptic analogue of Abel polynomials]
The polynomials $z(z+\mathfrak a n)^{n-1}$ are called
\emph{Abel polynomials} and 
the file numbers $f_{n-k}(A_n)$ for the board
$A_n:=B(0,\mathfrak a n,\dots,\mathfrak a n )$
are its coefficients at the monomials $z^k$, that is, 
$$z(z+\mathfrak a n)^{n-1}=\sum_{k=0}^n f_{n-k}(A_n) z^k.$$
The coefficient $f_{n-k}(A_n)$
counts the number of forests on $n$ labeled vertices composed 
of $k$ rooted trees where each of the vertices can be colored by one
of $\mathfrak a$ colors
and all the $k$ roots must have the first color. If we apply
Theorem~\ref{thm:ellalpha}
for $B=A_n$, then the left-hand side of \eqref{eqn:ellalphaprod} becomes 
an elliptic extension of the Abel polynomial. More precisely, we get 
$$ [z]_{a,b;q,p}([z+\mathfrak a n]_{aq^{-2\mathfrak a n},b q^{-2\mathfrak a n};q,p})^{n-1}
=\sum_{k=0}^n r_{n-k}^{(1)} (a,b;q,p;A_n) ([z]_{a,b;q,p})^k.$$
In particular, $r_{n-k}^{(1)} (a,b;q,p;A_n)$ has a nice closed form expression
$$r_{n-k}^{(1)} (a,b;q,p;A_n) =
\binom{n-1}{k-1}(W_{aq^{-2\mathfrak a n}, bq^{-\mathfrak a n};q,p}(\mathfrak a n))^{k-1}
([\mathfrak a n]_{aq^{-2\mathfrak a n}, bq^{-\mathfrak a n};q,p})^{n-k}$$
which can easily be proved by the recurrence relation \eqref{eqn:elptrecur}.
For a detailed description of the combinatorial interpretation 
for $f_{n-k}(A_n)$ and more general cases, see \cite[Section~5.3]{SY0}.
\end{example}


\subsection{The $\alpha=2$ case} 


In \cite{GH}, Goldman and Haglund observe that 
$$R_k ^{(2)}(St_n)=q^{\binom{n-k}{2}}\begin{bmatrix}n+k-1\\2k\end{bmatrix}_q
\prod_{j=1}^k [2j-1]_q.$$
Unfortunately, the elliptic analogue $r_k ^{(2)}(a,b;q,p;St_n)$ does not 
factor nicely. However, if we take the limit $p\to 0$ and $b\to 0$, 
then the corresponding $a,q$-analogue
$r_k ^{(2)}(a,q;St_n):=r_k ^{(2)}(a,0;q,0;St_n)$
has a closed form expression 
\begin{equation*}\label{eqn:alpha2rk}
r_k ^{(2)}(a,q;St_n)
=q^{-\binom{n+k}{2}+k(k+2)}\begin{bmatrix}n+k-1\\2k\end{bmatrix}_q
\prod_{j=1}^k [2j-1]_q
\frac{(a q;q^{-2})_{n-k} (a q^{1-2n};q^2)_k}{(a q;q^{-4})_n}.
\end{equation*}
Substituting this expression into the $\alpha$-factorization theorem gives 
the identity 
$$
\prod_{j=1}^n [z+2(j-1)]_{aq^{-4(j-1)};q}
=\sum_{k=0}^n r_{k}^{(2)} (a,q;St_n)
\prod_{i=1}^{n-k}[z+i-1]_{aq^{-2(i-1)};q},
$$
where we used the simplified notation $[z]_{a;q}:=[z]_{a,0;q,0}$.
If we express this identity in basic hypergeometric notation,
then it reduces to the following terminating ${}_4\phi_3$ sum
(which is equivalent to the terminating $q$-analogue of Whipple's
${}_3 F_2$ sum listed in \cite[(II.19)]{GRhyp}):
\begin{equation*}\label{eqn:aqalphaprop2}
\frac{(q^{z+2},aq^{z-2n};q^2)_n}{(q^{z+1},aq^{z-n};q)_n} 
=\sum_{k=0}^n \frac{(q^{-n}, q^{n+1}, a^{1/2}q^{-n-1/2}, -a^{1/2}q^{-n-1/2};q)_k}
{(q,-q, q^{-z-n}, aq^{z-n};q)_k}q^k.
\end{equation*}

\begin{remark}
In \cite{GH}, Goldman and Haglund give a bijective proof of 
$$r_k ^{(2)}(St_n)=m_k(K_{n+k-1}),$$
where $m_k(K_n)$ is the number of $k$-edge matchings in the
complete graph $K_n$ of $n$ vertices. Since 
$m_k (K_n)=\binom{n}{2k}\frac{(2k)!}{k!2^k}$, 
the bijection gives 
$$r_k ^{(2)}(St_n)=\binom{n+k-1}{2k}\frac{(2k)!}{k!2^k}.$$
\end{remark}


\section{Rook theory for matchings}\label{sec:match}


Haglund and Remmel \cite{HR} studied the rook theory for matchings
for which they replace permutations by
perfect matchings. Rather than $[n]\times [n]$ (the relevant board
for considering permutations of $n$ numbers), they consider the
following shifted board $B_{2n}$ (not to be confused with $B_z$,
considered earlier) pictured in Figure~\ref{fig:B2n}.

\setlength{\unitlength}{1.2pt}
\begin{figure}[ht]
\begin{picture}(120,116)(0,0)
\multiput(105,0)(0,15){1}{\line(1,0){15}}
\multiput(90,15)(0,15){1}{\line(1,0){30}}
\multiput(75,30)(0,15){1}{\line(1,0){45}}
\multiput(60,45)(0,15){1}{\line(1,0){60}}
\multiput(45,60)(0,15){1}{\line(1,0){75}}
\multiput(30,75)(0,15){1}{\line(1,0){90}}
\multiput(15,90)(0,15){1}{\line(1,0){105}}
\multiput(15,105)(0,15){1}{\line(1,0){105}}
\multiput(15,90)(0,15){1}{\line(0,1){15}}
\multiput(30,75)(0,15){1}{\line(0,1){30}}
\multiput(45,60)(0,15){1}{\line(0,1){45}}
\multiput(60,45)(0,15){1}{\line(0,1){60}}
\multiput(75,30)(0,15){1}{\line(0,1){75}}
\multiput(90,15)(0,15){1}{\line(0,1){90}}
\multiput(105,0)(0,15){1}{\line(0,1){105}}
\multiput(120,0)(0,15){1}{\line(0,1){105}}
\put(20,107){$2$}
\put(35,107){$3$}
\put(50,109){$\cdot$}
\put(64,109){$\cdot$}
\put(78,109){$\cdot$}
\put(88,107){$2n$-$1$}
\put(109,107){$2n$}
\put(6, 94){$1$}
\put(6, 79){$2$}
\put(7, 63){$\cdot$}
\put(7, 48){$\cdot$}
\put(7, 33){$\cdot$}
\put(1,17){$2n$-$2$}
\put(1, 2){$2n$-$1$}
\end{picture}
\caption{$B_{2n}$.}\label{fig:B2n}
\end{figure}
Note that any rook placement $P$ in $[n]\times [n]$ is a partial
permutation which can be extended to a placement $P_\sigma$ of $n$ rooks
corresponding to some permutation $\sigma\in S_n$, where $S_n$ is the
set of permutations of $n$ numbers, $1,2,\dots, n$.
For the board $B_{2n}$, we replace permutations by
perfect matchings of the complete graph $K_{2n}$ on vertices $1,2,\dots,2n$.
That is, for each perfect matching $M$ of $K_{2n}$ consisting of
$n$ pairwise vertex disjoint edges in $K_{2n}$, we let 
$$P_{M}=\{ (i,j) ~|~ i<j \text{ and } \{ i,j\} \in M\}$$
where $(i,j)$ denotes the square in row $i$ and column $j$ of $B_{2n}$
according to the labeling of rows and columns pictured in Figure~\ref{fig:B2n}.
We now define a rook placement to be a subset of some $P_M$ for a
perfect matching $M$ of $K_{2n}$. Given a board $B\subseteq B_{2n}$,
we let $\mathcal{M}_k (B)$ denote the set of $k$ element rook placements
in $B$. The analogue of a skyline board in this setting is a board
$B(a_1,a_2,\dots, a_{2n-1})=\{ (i,i+j)~|~ 1\le i\le 2n-1,1\le j\le a_i\}$.
It is called a \emph{shifted Ferrers board} if
$2n-1\ge a_1\ge a_2 \ge \cdots \ge a_{2n-1} \ge 0$ and the nonzero entries
of $a_i$'s are strictly decreasing. A rook in $(i,j)$ with $i<j$ in a
rook placement \emph{cancels} all cells $(i,s)$ in $B_{2n}$ with $i<s<j$ and
all cells $(t,j)$ and $(t,i)$ with $t<i$.
See Figure~\ref{fig:sboardc} for a specific example of a Ferrers board
and the cells being cancelled by a rook on the shifted board $B_8$.\\
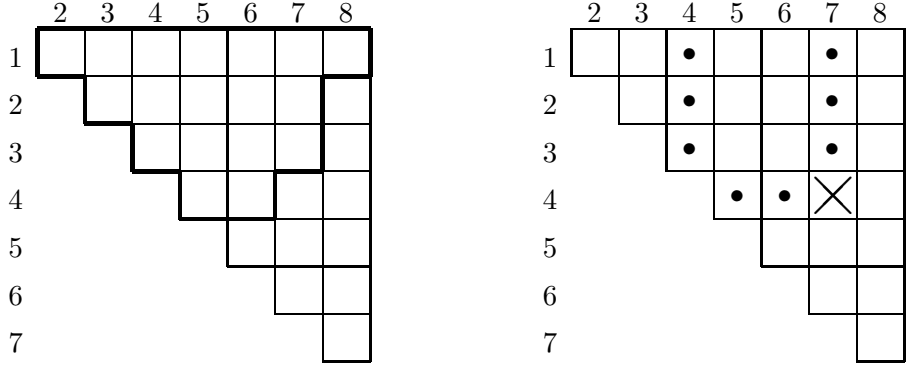
\begin{figure}[ht]
$$\begin{array}{cc}
\begin{picture}(160,123)(0,-3)
\multiput(105,0)(0,15){1}{\line(1,0){15}}
\multiput(90,15)(0,15){1}{\line(1,0){30}}
\multiput(75,30)(0,15){1}{\line(1,0){45}}
\multiput(60,45)(0,15){1}{\line(1,0){60}}
\multiput(45,60)(0,15){1}{\line(1,0){75}}
\multiput(30,75)(0,15){1}{\line(1,0){90}}
\multiput(15,90)(0,15){1}{\line(1,0){105}}
\multiput(15,105)(0,15){1}{\line(1,0){105}}
\multiput(15,90)(0,15){1}{\line(0,1){15}}
\multiput(30,75)(0,15){1}{\line(0,1){30}}
\multiput(45,60)(0,15){1}{\line(0,1){45}}
\multiput(60,45)(0,15){1}{\line(0,1){60}}
\multiput(75,30)(0,15){1}{\line(0,1){75}}
\multiput(90,15)(0,15){1}{\line(0,1){90}}
\multiput(105,0)(0,15){1}{\line(0,1){105}}
\multiput(120,0)(0,15){1}{\line(0,1){105}}
\thicklines\linethickness{1.3pt}
\multiput(15,105)(0,15){1}{\line(1,0){105}}
\multiput(15,90)(0,15){1}{\line(0,1){15}}
\multiput(15,90)(0,15){1}{\line(1,0){15}}
\multiput(30,75)(0,15){1}{\line(0,1){15}}
\multiput(30,75)(0,15){1}{\line(1,0){15}}
\multiput(45,60)(0,15){1}{\line(0,1){15}}
\multiput(45,60)(0,15){1}{\line(1,0){15}}
\multiput(60,45)(0,15){1}{\line(0,1){15}}
\multiput(60,45)(0,15){1}{\line(1,0){30}}
\multiput(90,60)(0,15){1}{\line(1,0){15}}
\multiput(105,90)(0,15){1}{\line(1,0){15}}
\multiput(90,45)(0,15){1}{\line(0,1){15}}
\multiput(105,60)(0,15){1}{\line(0,1){30}}
\multiput(120,90)(0,15){1}{\line(0,1){15}}
\put(20,107){$2$}
\put(35,107){$3$}
\put(50,107){$4$}
\put(65,107){$5$}
\put(80,107){$6$}
\put(95,107){$7$}
\put(110,107){$8$}
\put(6, 93){$1$}
\put(6, 78){$2$}
\put(6, 63){$3$}
\put(6, 48){$4$}
\put(6, 33){$5$}
\put(6, 18){$6$}
\put(6, 3){$7$}
\end{picture}
&
\begin{picture}(120,123)(0,-3)
\multiput(105,0)(0,15){1}{\line(1,0){15}}
\multiput(90,15)(0,15){1}{\line(1,0){30}}
\multiput(75,30)(0,15){1}{\line(1,0){45}}
\multiput(60,45)(0,15){1}{\line(1,0){60}}
\multiput(45,60)(0,15){1}{\line(1,0){75}}
\multiput(30,75)(0,15){1}{\line(1,0){90}}
\multiput(15,90)(0,15){1}{\line(1,0){105}}
\multiput(15,105)(0,15){1}{\line(1,0){105}}
\multiput(15,90)(0,15){1}{\line(0,1){15}}
\multiput(30,75)(0,15){1}{\line(0,1){30}}
\multiput(45,60)(0,15){1}{\line(0,1){45}}
\multiput(60,45)(0,15){1}{\line(0,1){60}}
\multiput(75,30)(0,15){1}{\line(0,1){75}}
\multiput(90,15)(0,15){1}{\line(0,1){90}}
\multiput(105,0)(0,15){1}{\line(0,1){105}}
\multiput(120,0)(0,15){1}{\line(0,1){105}}
\thicklines
\multiput(92,58)(0,15){1}{\line(1,-1){11}}
\multiput(92,47)(0,15){1}{\line(1,1){11}}
\put(20,107){$2$}
\put(35,107){$3$}
\put(50,107){$4$}
\put(65,107){$5$}
\put(80,107){$6$}
\put(95,107){$7$}
\put(110,107){$8$}
\put(6, 93){$1$}
\put(6, 78){$2$}
\put(6, 63){$3$}
\put(6, 48){$4$}
\put(6, 33){$5$}
\put(6, 18){$6$}
\put(6, 3){$7$}
\put(95,95){$\bullet$}
\put(95,80){$\bullet$}
\put(95,65){$\bullet$}
\put(80,50){$\bullet$}
\put(65,50){$\bullet$}
\put(50,95){$\bullet$}
\put(50,80){$\bullet$}
\put(50,65){$\bullet$}
\end{picture}
\end{array}$$
\caption{The shifted Ferrers board $B=(7,5,4,2,0,0,0)\subseteq B_8$,
\hfill\break
and the cells cancelled by a rook in $(4,7)$ on $B_8$.}\label{fig:sboardc}
\end{figure}
Given a shifted Ferrers board $B=B(a_1,\dots, a_{2n-1})\subseteq B_{2n}$
and a rook placement $P\in \mathcal{M}_k (B)$, we let $u_{B}(P)$ denote
the number of cells in $B$ which are neither in $P$ nor rook-cancelled by a
rook in $P$. Then Haglund and Remmel proved the following product formula.
\begin{theorem}\cite{HR}\label{thm:HR}
For a shifted Ferrers board $B=B(a_1,\dots, a_{2n-1})\subseteq B_{2n}$, define 
$$m_{k}(q;B)=\sum_{P\in\mathcal{M}_k (B)}q^{u_{B}(P)}.$$
Then we have 
\begin{equation*}\label{eqn:HRprod}
\prod_{i=1}^{2n-1}[z+a_{2n-i}-2i+2]_q =
\sum_{k=0}^{n}m_k (q;B)[z]\!\downarrow \downarrow _{2n-1-k}
\end{equation*}
where $[z]_q\!\downarrow\downarrow_k=[z]_q [z-2]_q\cdots [z-2k+2]_q$.
\end{theorem}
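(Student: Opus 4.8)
The plan is to adapt the \emph{augment-and-double-count} method used in the proof of Theorem~\ref{thm:ellalpha} (and, classically, in the Goldman--Rota--White factorization) to the shifted, matching-cancellation setting. Concretely, I would extend the staircase board $B_{2n}$ by adjoining a block of $z$ extra rows at the bottom of the staircase, obtaining an augmented board $\hat B$, and then compute the generating function $\sum_{P'} q^{(\#\text{ free cells of }P')}$ over all \emph{complete} placements $P'$ on $\hat B$ in two different ways. Equating the two evaluations should yield \eqref{eqn:HRprod} directly, exactly as equating the columnwise count and the split count yielded \eqref{eqn:ellalphaprod}. An equivalent route would be to establish a recursion for $m_k(q;B)$ upon adjoining a cell or a column to $B$ (the matching analogue of Proposition~\ref{prop:elptrecur}) and then close the argument by induction on $B$; but the double count is more transparent and matches the flow of this section.

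First I would carry out the \emph{columnwise} evaluation, which produces the left-hand side. Sweeping across the $2n-1$ columns of $\hat B$ and summing $q^{(\cdot)}$ over the admissible choices in each column, the column indexed by $i$ contributes a single $q$-number factor $[z+a_{2n-i}-2i+2]_q$. The ``$+z$'' inside the factor records the $z$ cells available in the extension block, the ``$+a_{2n-i}$'' records the cells of $B$ in that column, and the shift $-2i+2=-2(i-1)$ reflects the matching cancellation rule: each of the $i-1$ vertices resolved before this step kills cells in \emph{two} directions (along its row and along its two columns), hence removes $2$ positions from the current tally, and after $i-1$ steps this accumulates to $2(i-1)$. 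Multiplying over $i=1,\dots,2n-1$ gives $\prod_{i=1}^{2n-1}[z+a_{2n-i}-2i+2]_q$.

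Next I would perform the \emph{split} evaluation, which produces the right-hand side. Restricting a complete placement $P'$ to the cells of $B$ yields a matching-rook placement $P\in\mathcal M_k(B)$ for some $k$; the cells of $B$ contribute precisely $q^{u_B(P)}$, and summing over such $P$ gives $m_k(q;B)$. The cells lying in the extension contribute the factor $[z]\!\downarrow\downarrow_{2n-1-k}$: once $k$ edges have been committed inside $B$, the remaining $2n-1-k$ vertices are resolved into the extension, and since each such resolution consumes two of the extension's $z$ slots the available count steps down by $2$ at each stage, producing $[z]_q[z-2]_q\cdots[z-2(2n-1-k)+2]_q=[z]\!\downarrow\downarrow_{2n-1-k}$. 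Summing $m_k(q;B)\,[z]\!\downarrow\downarrow_{2n-1-k}$ over $0\le k\le n$ gives the right-hand side of \eqref{eqn:HRprod}, and the theorem follows.

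The main obstacle is the cancellation bookkeeping, which is genuinely more delicate than in ordinary rook theory because one matching-rook cancels along its row \emph{and} along two columns. One must make the augmented board $\hat B$ and the notion of ``complete placement'' precise enough that both evaluations provably enumerate the very same objects, and then verify that the running count of free cells behaves exactly as claimed: that the columnwise sweep yields the shifted factors $[z+a_{2n-i}-2i+2]_q$ while the split yields the step-$2$ decrements of $[z]\!\downarrow\downarrow_{2n-1-k}$, and, crucially, that the weight $q^{u_B(P)}$ of the $B$-part factors cleanly away from the extension weight, i.e.\ that no cancelled cell couples the two blocks in a way that would break the product. Once this setup is in place, each individual factor is a short computation with $q$-numbers of the kind already handled in Proposition~\ref{prop:elptrecur}.
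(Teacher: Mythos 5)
Your augment-and-double-count skeleton is indeed the paper's strategy: Theorem~\ref{thm:HR} is obtained there as the special case $\mathbf l=(1,\dots,1)$, $N=2n-1$ of Theorem~\ref{thm:HR^l}, whose proof extends the board, places rooks one line at a time to get the product side, and splits placements as $P'\cap B$ plus extension rooks to get the sum side. But your write-up has a genuine gap at exactly the point you flag and then defer: the mechanism forcing the step-$2$ decrements. A rook placed in the extension occupies only \emph{one} column, and its row-cancellation is confined to its own row; with the naive cancellation rule it removes only one cell from each subsequent line, which would produce $[z]_q[z-1]_q\cdots$ rather than $[z]_q[z-2]_q\cdots=[z]\!\downarrow\downarrow_{2n-1-k}$. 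The paper resolves this (following Haglund--Remmel) by an explicit artificial attack rule: the $k$-th lowest rook in the extension, sitting in column $j$, additionally attacks the first column in the cyclic list $j-1,j-2,\dots,L_N+2,L_N+1+z,L_N+z,\dots,j+1$ that still contains a square unattacked by the lower extension rooks, with the hypothesis $z\ge 2N$ guaranteeing such a column exists; one also needs rooks of $B$ to $\mathcal N$-cancel the cells of their row lying outside $B$, which is what makes the weight $q^{u_B(P)}$ of the $B$-part decouple cleanly from the extension count. None of this is constructed in your proposal, and without it neither evaluation is justified; the identity is first proved for integers $z\ge 2N$ and then extended to all $z$ by polynomiality/analytic continuation, another step you omit.

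Separately, your geometric setup is transposed in a way that breaks the stated computation. The paper attaches $z$ \emph{columns} to the right of the shifted board and sweeps \emph{row by row} from the bottom: the parameters $a_i$ are row lengths, so the $i$-th row from the bottom contributes $[z+a_{2n-i}-2i+2]_q$ because each earlier rook kills exactly two cells in every higher row. You instead attach $z$ rows below and sweep columnwise, claiming the column indexed by $i$ contributes $[z+a_{2n-i}-2i+2]_q$; but the cells of $B$ in a column are counted by column heights, not by the $a_i$, and the matching cancellation is not row/column symmetric (a rook cancels only the segment of its own row between its two coordinates, but two full column segments above it). One could in principle transpose the whole construction consistently, but as written the columnwise sweep does not yield the asserted factors. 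So the proposal correctly identifies the shape of the argument while leaving unproved precisely the two things that constitute the proof: the wrap-around attack rule in the extension and the orientation-correct bookkeeping that makes each factor an honest $q$-number.
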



Here, we shall consider a more generalized case.
Let $\mathbf l=(l_1,\dots,l_N)$ be a fixed
$N$-dimensional vector of positive integers.
For convenience, define $L_0=0$ and $L_j=\sum_{s=1}^jl_s$,
so that $l_j=L_{j}-L_{j-1}$, for $1\le j\le N$.
Now we extend $B_{2n}$ to an \emph{$\mathbf l$-shifted board}
with $L_N=l_1+\dots+l_N$
columns and $N$ rows as in Figure \ref{fig:B_N^l}, denoted by
$B_N ^{\mathbf l}$. Notice that the row labels
successively increase by the increments $l_N,l_{N-1},\dots,l_1$.
\begin{figure}[ht]
\begin{picture}(330,125)(-10,-10)
\multiput(285,0)(0,15){1}{\line(1,0){30}}
\multiput(240,15)(0,15){1}{\line(1,0){75}}
\multiput(165,30)(0,15){1}{\line(1,0){150}}
\multiput(150,45)(0,15){1}{\line(1,0){165}}
\multiput(120,60)(0,15){1}{\line(1,0){195}}
\multiput(60,75)(0,15){1}{\line(1,0){255}}
\multiput(15,90)(0,15){1}{\line(1,0){300}}
\multiput(15,105)(0,15){1}{\line(1,0){300}}
\multiput(15,90)(15,0){3}{\line(0,1){15}}
\multiput(60,75)(15,0){4}{\line(0,1){30}}
\multiput(120,60)(15,0){3}{\line(0,1){45}}
\multiput(150,45)(15,0){3}{\line(0,1){60}}
\multiput(165,30)(15,0){5}{\line(0,1){75}}
\multiput(240,15)(15,0){3}{\line(0,1){90}}
\multiput(285,0)(15,0){3}{\line(0,1){105}}
\put(5, 94){$1$}
\put(-12, 79){$l_N$+$1$}
\put(-12, 63){$l_N$+$l_{N-1}$+$1$}
\put(1, 48){$\cdot$}
\put(1, 33){$\cdot$}
\put(-12,17){$l_N$+\dots+$l_3$+$1$}
\put(-12, 2){$l_N$+\dots+$l_3$+$l_2$+$1$}
\put(18,88){$\underbrace{\qquad\qquad}$}
\put(36,73){$l_N$}
\put(63,73){$\underbrace{\qquad\qquad\quad\;\;}$}
\put(87,58){$l_{N-1}$}
\put(19,108){$2$}
\put(34,108){$3$}
\put(48,108){$\cdots$}
\put(243,13){$\underbrace{\qquad\quad\;\;\;}$}
\put(258,-2){$l_2$}
\put(287,-2){$\underbrace{\qquad\;\;}$}
\put(296,-17){$l_1$}
\put(295,108){$L_N$+$1$}
\put(130,109){$\cdot$}
\put(170,109){$\cdot$}
\put(210,109){$\cdot$}
\end{picture}
\caption{$B_{N}^{\mathbf l}$.}\label{fig:B_N^l}
\end{figure}
For $N=2n-1$ and $\mathbf l=(1,\dots,1)$, the $\mathbf l$-shifted
board $B_{N}^{\mathbf l}$ reduces to the 
shifted board $B_{2n}$ considered by Haglund and Remmel.
A rook placed in $B_{N}^{\mathbf l}$, say $\textbf{r}\in (i,j)$, $i<j$,
attacks the cells in the same row, the same column, and the cells
in the $i$-th column.
We can interpret a rook placement in $B_N^{\mathbf l}$ in the following way.
We call a labeled graph of at most $L_N+1$ vertices from the set
$\{1,2,\dots,L_N+1\}$
\emph{lazy}\footnote{We have chosen the name ``lazy'' since
in contrast to the complete graph we are (lazily) leaving out many edges.
Also, compared to the staircase with a unit increment in column height
after each step, the lazy graph features a staircase where the unit
increments in column height may be delayed.}
with respect to $\mathbf l=(l_1,\dots,l_N)$
(or, an $\mathbf l$-\emph{lazy graph}, in short)
if it only contains edges $(i,j)$
with $i<j$ when $i$ is of the form $l_N+\dots+l_{N-s+1}+1=L_N-L_{N-s}+1$ for
$s\in \{ 0, 1, \dots, N-1\}$.
Then a $k$-rook placement on $B_N^{\mathbf l}$ is a $k$-matching of
$K_{L_N+1}^{\mathbf l}$, the complete
$\mathbf l$-lazy graph on $L_N+1$ vertices.

Given a board $B\subseteq B_{N}^{\mathbf l}$, we let
$\mathcal{M}_k ^{\mathbf l}(B)$
denote the set of placements of $k$ nonattacking rooks in $B$.
An $\mathbf l$-\emph{shifted skyline board} is the set of cells
$$B(a_1,a_2,\dots, a_N)=\{ (L_N-L_{N-i+1}+1,L_N-L_{N-i+1}+1+j)~|~
1\le i\le N,1\le j\le a_i\}.$$
It is called an $\mathbf l$-\emph{shifted Ferrers board} if
$L_N\ge a_1\ge a_2 \ge \cdots \ge a_N\ge 0$ and the nonzero entries
of $a_i$ satisfy $a_i-a_{i+1}\ge l_{N+1-i}$ for $1\le i\le N-1$.
As in the ordinary shifted case, a rook in $(i,j)$
with $i<j$ in a
rook placement cancels all cells $(i,s)$ in $B_N^{\mathbf l}$ with $i<s<j$ and
all cells $(t,j)$ and $(t,i)$ in $B_N^{\mathbf l}$ with $t<i$.
Given an $\mathbf l$-shifted Ferrers board
$B=B(a_1,\dots, a_N)\subseteq B_{N}^{\mathbf l}$
and a rook placement $P\in \mathcal{M}_k ^{\mathbf l}(B)$,
let $u_B ^{(\mathbf l)}(P)$ denote
the number of cells in $B$ which are neither in $P$ nor cancelled by
any rook in $P$. Define 
\begin{equation*}\label{def:mkl}
m_{k}^{(\mathbf l)}(q;B)=\sum_{P\in\mathcal{M}_{k}^{\mathbf l} (B)}
q^{u_{B}^{(\mathbf l)}(P)}.
\end{equation*}
Then we can prove the following product formula. 

\begin{theorem}\label{thm:HR^l}
For any $\mathbf l$-shifted Ferrers board
$B=B(a_1,\dots, a_{N})\subseteq B_{N}^{\mathbf l}$, we have
\begin{equation}\label{eqn:HRprod^l}
\prod_{i=1}^{N}[z+a_{N-i+1}-2i+2]_q =
\sum_{k=0}^{N}m_{k}^{(\mathbf l)} (q;B)[z]\!\downarrow \downarrow _{N-k}
\end{equation}
where $[z]_q\!\downarrow\downarrow_k=[z]_q [z-2]_q\cdots [z-2k+2]_q$.
\end{theorem}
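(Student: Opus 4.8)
The plan is to mimic the ``extend the board and count in two ways'' strategy that worked for Theorem~\ref{thm:ellalpha}, but adapted to the shifted/matching setting. I would first establish a recursion for $m_k^{(\mathbf l)}(q;B)$ analogous to Proposition~\ref{prop:elptrecur}, obtained by deleting the leftmost active column of $B$ (i.e.\ the column with smallest index $i$ of the form $L_N-L_{N-s}+1$ that still carries cells) and distinguishing whether that column contains a rook or not. The key bookkeeping is the cancellation rule: a rook in $(i,j)$ cancels the cells $(i,s)$ with $i<s<j$, as well as the cells $(t,j)$ and $(t,i)$ with $t<i$. Because the columns are processed left to right and the active rows are indexed by the partial sums $L_N-L_{N-s}$, the number of uncancelled cells contributed by a column of height $a$ with a rook placed appropriately telescopes into a clean power of $q$, yielding a $q$-number $[z+a_{N-i+1}-2i+2]_q$ as the per-column factor. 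This is the matching analogue of the columnwise computation that produced the left-hand side product in Theorem~\ref{thm:ellalpha}.

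Concretely, I would extend $B$ by attaching $z$ extra rows below it (on the full $\mathbf l$-shifted board $B_N^{\mathbf l}$), call the result $B_z$, and compute
\begin{equation*}
\sum_{P\in\mathcal M_N^{\mathbf l}(B_z)} q^{u(P)}
\end{equation*}
in two ways. Counting columnwise, each of the $N$ active columns contributes its factor $[z+a_{N-i+1}-2i+2]_q$ (the $-2i+2$ shift reflecting the two coordinates—row $i$ and column $i$—that each previously placed rook removes from the active region), giving the left-hand side of \eqref{eqn:HRprod^l}. Counting by first fixing the restriction $P\cap B\in\mathcal M_k^{\mathbf l}(B)$ and then extending by placing the remaining $N-k$ rooks into the appended rows, the cells of $B$ contribute $q^{u_B^{(\mathbf l)}(P)}$ and the extension contributes the falling factorial $[z]_q\!\downarrow\downarrow_{N-k}$, since each newly placed rook in the appended block removes two of the available ``slots'' in the $z$-row strip, giving successive factors $[z]_q,[z-2]_q,\dots$. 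Summing over $k$ and over $P\in\mathcal M_k^{\mathbf l}(B)$ yields the right-hand side.

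The main obstacle, and the step deserving the most care, is verifying that the per-column contribution is \emph{exactly} $[z+a_{N-i+1}-2i+2]_q$ and that the extension factor is exactly $[z]_q\!\downarrow\downarrow_{N-k}$, for the $\mathbf l$-lazy geometry rather than the unit-increment case of Haglund and Remmel. The subtlety is that the row labels jump by $l_N,l_{N-1},\dots,l_1$ rather than by $1$, so I must check that the defining Ferrers condition $a_i-a_{i+1}\ge l_{N+1-i}$ is precisely what guarantees that, as rooks are placed left to right, the active column heights decrease by the right amount for the telescoping to produce a single $q$-number per column with no leftover $q$-powers. I expect this to reduce to a direct count of uncancelled cells in each column, where the cancellation of the two cells $(t,j)$ and $(t,i)$ for $t<i$ is what creates the $-2i+2$ shift; confirming this count is uniform across all $\mathbf l$ is the crux. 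Once the columnwise factorization and the extension count are pinned down, the double-counting identity follows immediately, and specializing $N=2n-1$, $\mathbf l=(1,\dots,1)$ recovers Theorem~\ref{thm:HR}.
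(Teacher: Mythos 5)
Your overall skeleton---extend the board, evaluate the generating function $\sum_P q^{u(P)}$ over full placements in two ways, reading the product off one line at a time and the falling factorial off the appended strip---is exactly the strategy of the paper's proof (note that you have transposed the picture: the indices $L_N-L_{N-s}+1$ label \emph{rows} of $B_N^{\mathbf l}$, and the paper attaches $z$ \emph{columns} of height $N$ to the right, so that each of the $N$ rows gains $z$ cells, matching the $N$ factors $[z+a_{N-i+1}-2i+2]_q$; your ``$z$ rows below, counted columnwise'' is this argument read sideways). But there is a genuine gap at precisely the point you defer: the assertion that ``each newly placed rook in the appended block removes two of the available slots.'' Under the cancellation rule you quote, a rook placed in the appended strip removes only \emph{one} strip column from the view of later strip rooks, namely its own---its partner column $i$ lies back inside $B_N^{\mathbf l}$, not in the strip---so the natural count of extensions of a fixed $P\in\mathcal M_k^{\mathbf l}(B)$ gives $[z]_q[z-1]_q\cdots[z-(N-k)+1]_q$, not $[z]_q[z-2]_q\cdots[z-2(N-k)+2]_q$. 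No amount of Ferrers-condition bookkeeping repairs this; one must \emph{impose} an artificial attack rule in the appended region, and this is the actual crux of the paper's proof: the $k$-th lowest strip rook in column $j$ additionally attacks the first column in the cyclic list $j-1,j-2,\dots,L_N+2,L_N+1+z,L_N+z,\dots,j+1$ that still contains an unattacked square, so that every strip rook consumes exactly two strip columns (and consequently also cancels exactly two cells in every higher strip row, which is what keeps the row-by-row product computation intact). This wrap-around rule is only well defined when $z\ge 2N$, which is why the paper proves \eqref{eqn:HRprod^l} for integers $z\ge 2N$ and then invokes analytic continuation---a step also absent from your outline, where $z$ is implicitly an arbitrary nonnegative integer.

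A related piece of bookkeeping you would also need: the two-way count is not over all of $\mathcal M_N^{\mathbf l}$ of the extended board, but over the restricted set $\mathcal N_N$ of $N$-rook placements whose rooks inside $B_N^{\mathbf l}$ are confined to $B$, together with an $\mathcal N$-cancellation that cancels by fiat the cells of $B_N^{\mathbf l}-B$ in a rook's row and the strip cells to the right of a strip rook; only with these conventions does the statistic split as $u(P')=u_B^{(\mathbf l)}(P'\cap B)+(\text{strip statistic})$, which your proposal assumes without stating. On the positive side, your instinct that the condition $a_i-a_{i+1}\ge l_{N+1-i}$ is exactly what guarantees that each rook of $B$ cancels precisely two cells of $B$ in every higher row (so that the $i$-th row contributes $[z+a_{N-i+1}-2i+2]_q$) is correct and matches the paper; but without the modified attack rule in the appended region the double falling factorial, and with it the right-hand side of \eqref{eqn:HRprod^l}, cannot be produced, so as written the proof does not go through.
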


\begin{proof}
It suffices to prove the theorem for nonnegative integer values of $z$.
Then the result follows by analytic continuation.

We extend the board $B_{N}^{\mathbf l}$ by attaching $z$ many columns
of height $N$ to the right of $B_N^{\mathbf l}$, as pictured in
Figure~\ref{fig:BNzl}, and denote it by $B_{N,z}^{\mathbf l}$.
\begin{figure}[ht]
\begin{picture}(300,135)(0,-15)
\multiput(10,105)(0,15){1}{\line(1,0){285}}
\multiput(10,90)(0,15){1}{\line(1,0){120}}
\multiput(10,90)(15,0){3}{\line(0,1){15}}
\multiput(55,75)(0,15){1}{\line(1,0){75}}
\multiput(55,75)(15,0){4}{\line(0,1){30}}
\multiput(115,60)(0,15){1}{\line(1,0){15}}
\multiput(115,60)(0,15){1}{\line(0,1){45}}
\multiput(175,0)(15,0){5}{\line(0,1){105}}
\multiput(280,0)(15,0){2}{\line(0,1){105}}
\multiput(175,0)(0,15){1}{\line(1,0){120}}
\multiput(160,15)(0,15){6}{\line(1,0){75}}
\multiput(280,15)(0,15){6}{\line(1,0){15}}
\thicklines\linethickness{1.3pt}
\multiput(205,0)(15,0){1}{\line(0,1){105}}
\put(130, 45){$\cdot$}
\put(140,35){$\cdot$}
\put(150,25){$\cdot$}
\put(245, 50){$\cdot$}
\put(255,50){$\cdot$}
\put(265,50){$\cdot$}
\put(13, 88){$\underbrace{\qquad\qquad}$}
\put(30, 73){$l_N$}
\put(58,73){$\underbrace{\qquad\qquad\quad\;\;}$}
\put(82,58){$l_{N-1}$}
\put(177, -2){$\underbrace{\qquad\;\;}$}
\put(187, -17){$l_1$}
\put(1, 94){$1$}
\put(-16, 79){$l_N$+$1$}
\put(-16, 60){$l_N$+$l_{N-1}$+$1$}
\put(-4, 45){$\cdot$}
\put(-4, 30){$\cdot$}
\put(-4, 16){$\cdot$}
\put(-16,1){$l_N$+\dots+$l_3$+$l_2$+$1$}
\put(14, 108){$2$}
\put(29, 108){$3$}
\put(70, 110){$\cdot$}
\put(100,110){$\cdot$}
\put(130,110){$\cdot$}
\put(180, 108){$L_N$+$1$}
\put(206, 108){$L_N$+$2$}
\put(256,109){$\cdots$}
\put(275,108){$L_N$+1+$z$}
\end{picture}
\caption{$B_{N,z}^{\mathbf l}$.}\label{fig:BNzl}
\end{figure}
Now we define the set of cells that a rook in $(i,j)$ \emph{attacks}
in the extended board $B_{N,z}^{\mathbf l}$. If $(i,j)\in B_{N}^{\mathbf l}$,
then it attacks as we explained above, that is, the cells in the same row,
in the same column, and the cells in the $i$-th column. If
$(i,j)\in B_{N,z}^{\mathbf l}-B_{N}^{\mathbf l}$,
then the cells that a rook in $(i,j)$ attacks in a rook placement
$P$ depend on other rooks in $P\cap (B_{N,z}^{\mathbf l}-B_{N}^{\mathbf l})$.
That is, if the rook in $(i,j)$, say $\textbf{r}_1$, is the lowest rook in
$P\cap (B_{N,z}^{\mathbf l}-B_{N}^{\mathbf l})$,
then $\textbf{r}_1$ attacks all cells in
row $i$ and column $j$ other than $(i,j)$ and all cells in column $j-1$
if $L_N+2<j$. If $j=L_N+2$, then $\textbf{r}_1$ attacks all cells in row $i$
and column $j$ other than $(i,j)$ plus all cells in column $L_N+1+z$.
In general, if the rook in $(i,j)$ is the $k$-th lowest rook, say
$\textbf{r}_k$, then $\textbf{r}_k$ attacks all cells in row $i$ and
column $j$ other than $(i,j)$ and all cells in the first column of the
following list of columns, $j-1, j-2, \dots, L_N+2, L_N+1+z, L_N+z,\dots, j+1$,
that contain a square which is not attacked by any of the $k-1$ lower rooks
in $B_{N,z}^{\mathbf l} - B_{N}^{\mathbf l}$. In other words,
if $\textbf{r}$ is in $(i,j)$,
then $\textbf{r}$ attacks all cells in column $j$ and the cells in the
first column $s$ in the extended part to the left of column $j$ which
has a cell that is not attacked by any lower rooks in
$P\cap (B_{N,z}^{\mathbf l} - B_{N}^{\mathbf l})$.
If there is no such column to the left
of column $j$, then start scanning from column $L_N+1+z$ and look for
the right-most column containing a square which is not attacked by
any lower rooks in $P\cap (B_{N,z}^{\mathbf l} - B_{N}^{\mathbf l})$.
Note that the existence
of such a column $s$ is guaranteed if $z\ge 2N$. 

Now let $B$ be an $\mathbf l$-shifted Ferrers board contained
in $B_{N}^{\mathbf l}$
and assume that $z\ge 2N$. Let $\mathcal{N}_N(B_{N,z}^{\mathbf l})$
denote the set of all
placements $P$ of $N$ rooks in $B_{N,z}^{\mathbf l}$
such that no cell which contains
a rook in $P$ is attacked by another rook in $P$ and any rook in
$B_{N}^{\mathbf l}\cap P$ is contained in $B$, namely, rooks are not placed
outside of the $\mathbf l$-shifted Ferrers board $B$ in $B_N ^{\mathbf l}$.
To prove \eqref{eqn:HRprod^l}, we define a rook cancellation for a
rook placement $P$ in $\mathcal{N}_N(B_{N,z}^{\mathbf l})$. If a rook $\textbf{r}$
is in $(i,j)\in B$, then we say $\textbf{r}$ $\mathcal{N}$-cancels
all cells in 
\begin{align*}
\{ (r,j): r<i\}\,\cup\, \{(i,s): i+1\le s <j\}\,\cup\,\{ (t,i): t<i \}&\\
\cup\,\{ (i,u): u>j \text{ and } (i,u)\notin B\}&.
\end{align*}
Then note that the cells from the first three sets in this union
agree with the cells that are cancelled by $\textbf{r}$ in $B$ relative to the
$u_B ^{(\mathbf l)}(P\cap B)$ statistic and the last set in the union
contains the cells
to the right of $\textbf{r}$ in $B_{N,z}^{\mathbf l}$ which are not in $B$.
If $\textbf{r}$ is in $(i,j)\in B_{N,z}^{\mathbf l} - B_{N}^{\mathbf l}$, then let
$\mathcal{A}_{(i,j)}^N$ denote the set of cells attacked by $\textbf{r}$.
The rook $\textbf{r}$ then $\mathcal{N}$-cancels all cells in
$\mathcal{A}_{(i,j)}^N$ that lie in rows $s$ with $s<i$ plus all cells
in row $i$ that are either in  $B_{N}^{\mathbf l} -B$ or to the right of $(i,j)$.
Now we let $u_{\mathcal{N}}^{(\mathbf l)}(P)$ denote the number of squares in
$B_{N,z}^{\mathbf l}-P$ which are not $\mathcal{N}$-cancelled by any rooks in $P$.
Then  \eqref{eqn:HRprod^l} is the result of computing the sum 
$$
\sum_{P\in \mathcal{N}_N (B_{N,z}^{\mathbf l})}
q^{u_{\mathcal{N}}^{(\mathbf l)}(P)}
$$
in two different ways. First, we could place $N$ rooks row by row.
Then starting from the right-most cell in the bottom row of $B$, we move
the rook to the left, and then again start from column $L_N+2$
(which is the left-most
cell of the extended part) and move the rook to the right.
For a graphical explanation, see Figure~\ref{fig:firstrow}.
In the figure, the board $B$ is
outlined by thick lines and the boundary of the extended board is denoted
by double lines. 
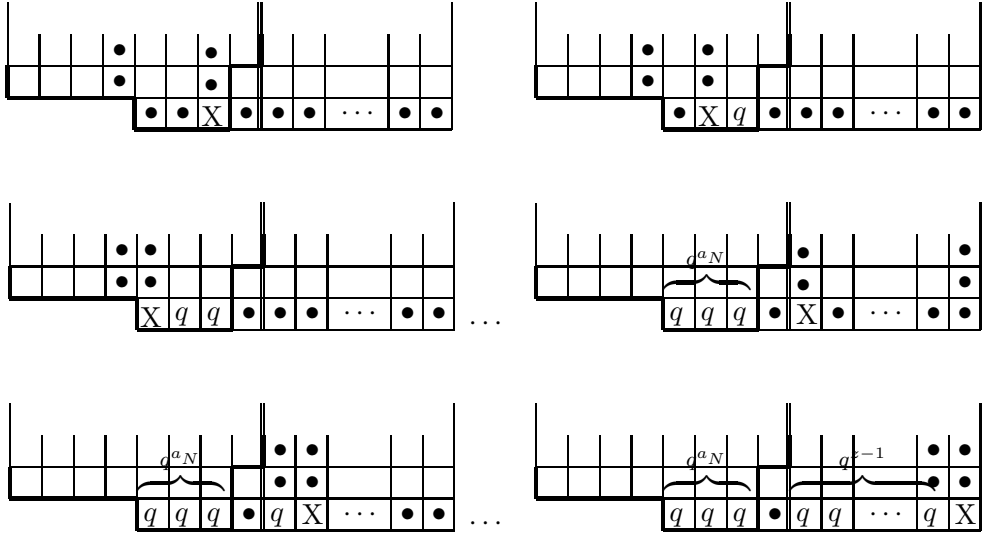
\begin{figure}[ht]
$$\begin{array}{cc}
\begin{picture}(140,40)(0,0)
\multiput(0,10)(10,0){1}{\line(0,1){30}}
\multiput(10,10)(10,0){3}{\line(0,1){20}}
\multiput(0,10)(0,10){1}{\line(1,0){140}}
\multiput(40,0)(10,0){7}{\line(0,1){30}}
\multiput(120,0)(10,0){2}{\line(0,1){30}}
\multiput(140,0)(10,0){1}{\line(0,1){40}}
\multiput(0,20)(0,10){1}{\line(1,0){140}}
\multiput(40,0)(0,10){1}{\line(1,0){100}}
\multiput(79,0)(1,0){2}{\line(0,1){40}}
\thicklines\linethickness{1.3pt}
\multiput(0,10)(10,0){1}{\line(0,1){10}}
\multiput(70,0)(0,10){1}{\line(0,1){20}}
\multiput(0,10)(0,10){1}{\line(1,0){40}}
\multiput(40,0)(10,0){1}{\line(0,1){10}}
\multiput(40,0)(0,10){1}{\line(1,0){30}}
\multiput(70,20)(0,10){1}{\line(1,0){10}}
\multiput(80,20)(0,10){1}{\line(0,1){10}}
\put(61,1){X}
\put(105,3){$\cdots$}
\put(43,3){$\bullet$}
\put(53,3){$\bullet$}
\put(73,3){$\bullet$}
\put(83,3){$\bullet$}
\put(93,3){$\bullet$}
\put(123,3){$\bullet$}
\put(133,3){$\bullet$}
\put(62,12){$\bullet$}
\put(62,22){$\bullet$}
\put(33,13){$\bullet$}
\put(33,23){$\bullet$}
\end{picture}\qquad &
\begin{picture}(140,40)(0,0)
\multiput(0,10)(10,0){1}{\line(0,1){30}}
\multiput(10,10)(10,0){3}{\line(0,1){20}}
\multiput(0,10)(0,10){1}{\line(1,0){140}}
\multiput(40,0)(10,0){7}{\line(0,1){30}}
\multiput(120,0)(10,0){2}{\line(0,1){30}}
\multiput(140,0)(10,0){1}{\line(0,1){40}}
\multiput(0,20)(0,10){1}{\line(1,0){140}}
\multiput(40,0)(0,10){1}{\line(1,0){100}}
\multiput(79,0)(1,0){2}{\line(0,1){40}}
\thicklines\linethickness{1.3pt}
\multiput(0,10)(10,0){1}{\line(0,1){10}}
\multiput(70,0)(0,10){1}{\line(0,1){20}}
\multiput(0,10)(0,10){1}{\line(1,0){40}}
\multiput(40,0)(10,0){1}{\line(0,1){10}}
\multiput(40,0)(0,10){1}{\line(1,0){30}}
\multiput(70,20)(0,10){1}{\line(1,0){10}}
\multiput(80,20)(0,10){1}{\line(0,1){10}}
\put(51,1){X}
\put(105,3){$\cdots$}
\put(43,3){$\bullet$}
\put(62,3){$q$}
\put(73,3){$\bullet$}
\put(83,3){$\bullet$}
\put(93,3){$\bullet$}
\put(123,3){$\bullet$}
\put(133,3){$\bullet$}
\put(52,13){$\bullet$}
\put(52,23){$\bullet$}
\put(33,13){$\bullet$}
\put(33,23){$\bullet$}
\end{picture}\\
\begin{picture}(140,60)(0,0)
\multiput(0,10)(10,0){1}{\line(0,1){30}}
\multiput(10,10)(10,0){3}{\line(0,1){20}}
\multiput(0,10)(0,10){1}{\line(1,0){140}}
\multiput(40,0)(10,0){7}{\line(0,1){30}}
\multiput(120,0)(10,0){2}{\line(0,1){30}}
\multiput(140,0)(10,0){1}{\line(0,1){40}}
\multiput(0,20)(0,10){1}{\line(1,0){140}}
\multiput(40,0)(0,10){1}{\line(1,0){100}}
\multiput(79,0)(1,0){2}{\line(0,1){40}}
\thicklines\linethickness{1.3pt}
\multiput(0,10)(10,0){1}{\line(0,1){10}}
\multiput(70,0)(0,10){1}{\line(0,1){20}}
\multiput(0,10)(0,10){1}{\line(1,0){40}}
\multiput(40,0)(10,0){1}{\line(0,1){10}}
\multiput(40,0)(0,10){1}{\line(1,0){30}}
\multiput(70,20)(0,10){1}{\line(1,0){10}}
\multiput(80,20)(0,10){1}{\line(0,1){10}}
\put(41,1){X}
\put(105,3){$\cdots$}
\put(73,3){$\bullet$}
\put(52,3){$q$}
\put(62,3){$q$}
\put(83,3){$\bullet$}
\put(93,3){$\bullet$}
\put(123,3){$\bullet$}
\put(133,3){$\bullet$}
\put(42,13){$\bullet$}
\put(42,23){$\bullet$}
\put(33,13){$\bullet$}
\put(33,23){$\bullet$}
\end{picture}~\cdots &
\begin{picture}(140,60)(0,0)
\multiput(0,10)(10,0){1}{\line(0,1){30}}
\multiput(10,10)(10,0){3}{\line(0,1){20}}
\multiput(0,10)(0,10){1}{\line(1,0){140}}
\multiput(40,0)(10,0){7}{\line(0,1){30}}
\multiput(120,0)(10,0){2}{\line(0,1){30}}
\multiput(140,0)(10,0){1}{\line(0,1){40}}
\multiput(0,20)(0,10){1}{\line(1,0){140}}
\multiput(40,0)(0,10){1}{\line(1,0){100}}
\multiput(79,0)(1,0){2}{\line(0,1){40}}
\thicklines\linethickness{1.3pt}
\multiput(0,10)(10,0){1}{\line(0,1){10}}
\multiput(70,0)(0,10){1}{\line(0,1){20}}
\multiput(0,10)(0,10){1}{\line(1,0){40}}
\multiput(40,0)(10,0){1}{\line(0,1){10}}
\multiput(40,0)(0,10){1}{\line(1,0){30}}
\multiput(70,20)(0,10){1}{\line(1,0){10}}
\multiput(80,20)(0,10){1}{\line(0,1){10}}
\put(42,3){$q$}
\put(105,3){$\cdots$}
\put(73,3){$\bullet$}
\put(52,3){$q$}
\put(62,3){$q$}
\put(93,3){$\bullet$}
\put(82,2){X}
\put(123,3){$\bullet$}
\put(133,3){$\bullet$}
\put(82,12){$\bullet$}
\put(82,22){$\bullet$}
\put(133,13){$\bullet$}
\put(133,23){$\bullet$}
\put(40, 12){$\overbrace{\qquad\quad}^{q^{a_N}}$}
\end{picture}\\
\begin{picture}(140,60)(0,0)
\multiput(0,10)(10,0){1}{\line(0,1){30}}
\multiput(10,10)(10,0){3}{\line(0,1){20}}
\multiput(0,10)(0,10){1}{\line(1,0){140}}
\multiput(40,0)(10,0){7}{\line(0,1){30}}
\multiput(120,0)(10,0){2}{\line(0,1){30}}
\multiput(140,0)(10,0){1}{\line(0,1){40}}
\multiput(0,20)(0,10){1}{\line(1,0){140}}
\multiput(40,0)(0,10){1}{\line(1,0){100}}
\multiput(79,0)(1,0){2}{\line(0,1){40}}
\thicklines\linethickness{1.3pt}
\multiput(0,10)(10,0){1}{\line(0,1){10}}
\multiput(70,0)(0,10){1}{\line(0,1){20}}
\multiput(0,10)(0,10){1}{\line(1,0){40}}
\multiput(40,0)(10,0){1}{\line(0,1){10}}
\multiput(40,0)(0,10){1}{\line(1,0){30}}
\multiput(70,20)(0,10){1}{\line(1,0){10}}
\multiput(80,20)(0,10){1}{\line(0,1){10}}
\put(42,3){$q$}
\put(105,3){$\cdots$}
\put(73,3){$\bullet$}
\put(52,3){$q$}
\put(62,3){$q$}
\put(92,2){X}
\put(82,3){$q$}
\put(123,3){$\bullet$}
\put(133,3){$\bullet$}
\put(83,13){$\bullet$}
\put(83,23){$\bullet$}
\put(93,13){$\bullet$}
\put(93,23){$\bullet$}
\put(40, 12){$\overbrace{\qquad\quad}^{q^{a_N}}$}
\end{picture}~\cdots &
\begin{picture}(140,60)(0,0)
\multiput(0,10)(10,0){1}{\line(0,1){30}}
\multiput(10,10)(10,0){3}{\line(0,1){20}}
\multiput(0,10)(0,10){1}{\line(1,0){140}}
\multiput(40,0)(10,0){7}{\line(0,1){30}}
\multiput(120,0)(10,0){2}{\line(0,1){30}}
\multiput(140,0)(10,0){1}{\line(0,1){40}}
\multiput(0,20)(0,10){1}{\line(1,0){140}}
\multiput(40,0)(0,10){1}{\line(1,0){100}}
\multiput(79,0)(1,0){2}{\line(0,1){40}}
\thicklines\linethickness{1.3pt}
\multiput(0,10)(10,0){1}{\line(0,1){10}}
\multiput(70,0)(0,10){1}{\line(0,1){20}}
\multiput(0,10)(0,10){1}{\line(1,0){40}}
\multiput(40,0)(10,0){1}{\line(0,1){10}}
\multiput(40,0)(0,10){1}{\line(1,0){30}}
\multiput(70,20)(0,10){1}{\line(1,0){10}}
\multiput(80,20)(0,10){1}{\line(0,1){10}}
\put(42,3){$q$}
\put(105,3){$\cdots$}
\put(73,3){$\bullet$}
\put(52,3){$q$}
\put(62,3){$q$}
\put(92,3){$q$}
\put(82,3){$q$}
\put(122,3){$q$}
\put(132,2){X}
\put(123,13){$\bullet$}
\put(123,23){$\bullet$}
\put(133,13){$\bullet$}
\put(133,23){$\bullet$}
\put(40, 12){$\overbrace{\qquad\quad}^{q^{a_N}}$}
\put(80,12){$\overbrace{\qquad\qquad\quad}^{q^{z-1}}$}
\end{picture}
\end{array}
$$
\caption{Possible rook placements in the bottom first row.}\label{fig:firstrow}
\end{figure}
All the possible rook placements in the bottom row contribute
$1+q+\cdots +q^{z+a_N -1}=[z+a_N]_q$.
Since this rook $\mathcal{N}$-cancels exactly two cells in the above row,
the possible rook placements in the second row from the bottom contribute
$[z+a_{N-1}-2]_q$.  Continuing this way, we get 
\begin{equation*}
\sum_{P\in \mathcal{N}_{N}(B_{N,z}^{\mathbf l})}q^{u_{\mathcal{N}}^{(\mathbf l)}(P)}=
\prod_{i=1}^{N}[z+a_{N-i+1}-2i+2]_q.
\end{equation*}

On the other hand, we could fix a placement
$P\in \mathcal{M}_k ^{\mathbf l}(B)$ and consider the sum 
$$
\sum_{\substack{P'\in \mathcal{N}_{N}(B_{N,z}^l)\\P'\cap B=P}}
q^{u_{\mathcal{N}}^{(\mathbf l)}(P')}.
$$
The way how the $\mathcal{N}$-cancellation is defined ensures that
for any $P'\in\mathcal{N}_N (B_{N,z}^{\mathbf l})$ such that $P' \cap B=P$,
the number of squares of $B_{N}^{\mathbf l}-P$ which are not
$\mathcal{N}$-cancelled
by some rook in $P'$ is consistent with $u_B ^{(\mathbf l)}(P)$.
The same type of argument that we used in the first case applies for
the possible placements of $N-k$ rooks in $B_{N,z}^{\mathbf l}-B_{N}^{\mathbf l}$
which gives
$[z]_q [z-2]_q \cdots [z-2(N-k)+2]_q=[z]_q\!\downarrow\downarrow _{N-k}$.
Thus we have 
\begin{align*}
\sum_{P\in \mathcal{N}_{N}(B_{N,z}^l)}q^{u_{\mathcal{N}}^{(\mathbf l)}(P)}&=
\sum_{k=0}^N \sum_{P\in \mathcal{M}_k ^{\mathbf l}(B)}q^{u_B ^{(\mathbf l)}(P)}[z]_q
\!\downarrow\downarrow _{N-k}\\
&= \sum_{k=0}^N m_k ^{(\mathbf l)}(q;B)[z]_q\!\downarrow\downarrow _{N-k},
\end{align*}
as desired.
\end{proof}

\begin{remark}
Note that when $\mathbf l=(1,\dots,1)$ and $N=2n-1$,
we recover Theorem~\ref{thm:HR} of Haglund and Remmel.  
It is clear from the definition of attacking rooks that in this case 
there can be at most $n$ rooks on the board, so on the right-hand
side of \eqref{eqn:HRprod^l}
the index $k$ can only vary between $0$ and $n$. Hence, the placements 
of $k$ nonattacking rooks on $B_{2n}$ correspond to $k$-matchings of the 
complete graph $K_{2n}$ and $n$-matchings are perfect matchings.
In the general case, the placements of $k$ nonattacking rooks
on $B_{N}^{\mathbf l}$ correspond to $k$-matchings of $K_{L_N+1}^{\mathbf l}$,
while for $L_N+1\ge 2N$ any such placements of $N$
nonattacking rooks correspond to \emph{maximal matchings}.
\end{remark}

The $z=0$ case of Theorem~\ref{thm:HR^l} immediately gives the following
product formula for $m_k^{(\mathbf l)}(q;B)$ when $k=N$.

\begin{corollary}
Given an $\mathbf l$-shifted Ferrers board
$B=B(a_1,\dots, a_N)\subseteq B_N ^{\mathbf l}$, we have
\begin{equation*}
m_N ^{(\mathbf l)}(q;B)=\prod_{i=1}^N[a_{N-i+1}-2i+2]_q.
\end{equation*}
In particular, if $B=B_N ^{\mathbf l}$ (the full $\mathbf l$-shifted board), then 
\begin{equation*}\label{eq:qmaxmatch}
m_N ^{(\mathbf l)}(q;B_N ^{\mathbf l})=\prod_{i=1}^N [L_i-2i+2]_q.
\end{equation*}
\end{corollary}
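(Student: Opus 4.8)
The plan is to obtain the corollary as the evaluation of Theorem~\ref{thm:HR^l} at $z=0$. First I would substitute $z=0$ into the product formula~\eqref{eqn:HRprod^l}. The decisive point is that $[0]_q=\frac{1-q^0}{1-q}=0$, and that for $N-k\ge 1$ the falling factorial $[z]_q\!\downarrow\downarrow_{N-k}=[z]_q[z-2]_q\cdots[z-2(N-k)+2]_q$ carries $[z]_q$ as its leading factor. Consequently every summand on the right-hand side indexed by $k<N$ vanishes at $z=0$, and the only surviving term is the one with $k=N$, where $[z]_q\!\downarrow\downarrow_0$ is the empty product, equal to $1$. The right-hand side therefore collapses to $m_N^{(\mathbf l)}(q;B)$, whereas the left-hand side at $z=0$ is simply $\prod_{i=1}^N[a_{N-i+1}-2i+2]_q$. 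Equating the two yields the first asserted identity.

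For the special case $B=B_N^{\mathbf l}$ I would next read off the skyline parameters $a_i$ of the full $\mathbf l$-shifted board from its cell description. The $i$-th relevant column of an $\mathbf l$-shifted skyline board has its lower endpoint at vertex $L_N-L_{N-i+1}+1$; for the full board this column extends all the way to the right boundary, i.e.\ up to column $L_N+1$. Hence $a_i=(L_N+1)-(L_N-L_{N-i+1}+1)=L_{N-i+1}$, and in particular $a_{N-i+1}=L_i$. Substituting this into the identity just proved gives $m_N^{(\mathbf l)}(q;B_N^{\mathbf l})=\prod_{i=1}^N[L_i-2i+2]_q$, as claimed.

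I do not expect a genuine obstacle here, since the corollary is nothing more than a pointwise evaluation of the theorem together with a bookkeeping of the board parameters. The only step requiring any care is the identification $a_i=L_{N-i+1}$ for the full board, which amounts to checking that the column rooted at the vertex $L_N-L_{N-i+1}+1$ indeed reaches the last vertex $L_N+1$; this is immediate from the definition of $B_N^{\mathbf l}$ but is worth stating explicitly to make the index shift $a_{N-i+1}=L_i$ transparent.
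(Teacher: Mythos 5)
Your proposal is correct and follows essentially the same route as the paper, which obtains the corollary precisely as the $z=0$ specialization of Theorem~\ref{thm:HR^l}: since $[0]_q=0$, every term with $k<N$ on the right-hand side of \eqref{eqn:HRprod^l} vanishes and only the $k=N$ term (with empty product equal to $1$) survives. Your bookkeeping for the full board is also accurate: the row rooted at vertex $L_N-L_{N-i+1}+1$ extends to column $L_N+1$, giving $a_i=L_{N-i+1}$ and hence $a_{N-i+1}=L_i$, exactly as needed for the second identity.
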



Now we work out an elliptic analogue of Theorem~\ref{thm:HR^l}.
We essentially assume the same rook cancellation.
However, for the purpose
of conveniently computing the elliptic weights of cells, we
label the rows and columns
of $B_{N}^{\mathbf l}$ as in Figure~\ref{fig:weightedB_N^l}, namely,
we label the columns from $1$ to $L_N$, from right to left,
and label the rows from $1$ to $N$ from the bottom.
\begin{figure}[ht]
\begin{picture}(330,130)(0,-10)
\multiput(285,0)(0,15){1}{\line(1,0){30}}
\multiput(240,15)(0,15){1}{\line(1,0){75}}
\multiput(165,30)(0,15){1}{\line(1,0){150}}
\multiput(150,45)(0,15){1}{\line(1,0){165}}
\multiput(120,60)(0,15){1}{\line(1,0){195}}
\multiput(60,75)(0,15){1}{\line(1,0){255}}
\multiput(15,90)(0,15){1}{\line(1,0){300}}
\multiput(15,105)(0,15){1}{\line(1,0){300}}
\multiput(15,90)(15,0){3}{\line(0,1){15}}
\multiput(60,75)(15,0){4}{\line(0,1){30}}
\multiput(120,60)(15,0){3}{\line(0,1){45}}
\multiput(150,45)(15,0){3}{\line(0,1){60}}
\multiput(165,30)(15,0){5}{\line(0,1){75}}
\multiput(240,15)(15,0){3}{\line(0,1){90}}
\multiput(285,0)(15,0){3}{\line(0,1){105}}
\put(2, 94){$N$}
\put(0, 79){$N$-$1$}
\put(2, 63){$\cdot$}
\put(2, 48){$\cdot$}
\put(2, 33){$\cdot$}
\put(2,17){$2$}
\put(2, 2){$1$}
\put(18,88){$\underbrace{\qquad\qquad}$}
\put(36,73){$l_N$}
\put(63,73){$\underbrace{\qquad\qquad\quad\;\;}$}
\put(87,58){$l_{N-1}$}
\put(15,108){$L_N$}
\put(30,108){$L_N$-$1$}
\put(60,108){$\cdots$}
\put(242,13){$\underbrace{\qquad\qquad}$}
\put(258,-2){$l_2$}
\put(287,-2){$\underbrace{\qquad\;\;}$}
\put(296,-17){$l_1$}
\put(275,108){$3$}
\put(290,108){$2$}
\put(305,108){$1$}
\put(130,109){$\cdot$}
\put(170,109){$\cdot$}
\put(210,109){$\cdot$}
\end{picture}
\caption{Labeling for elliptic weights in $B_{N}^{l}$.}\label{fig:weightedB_N^l}
\end{figure}
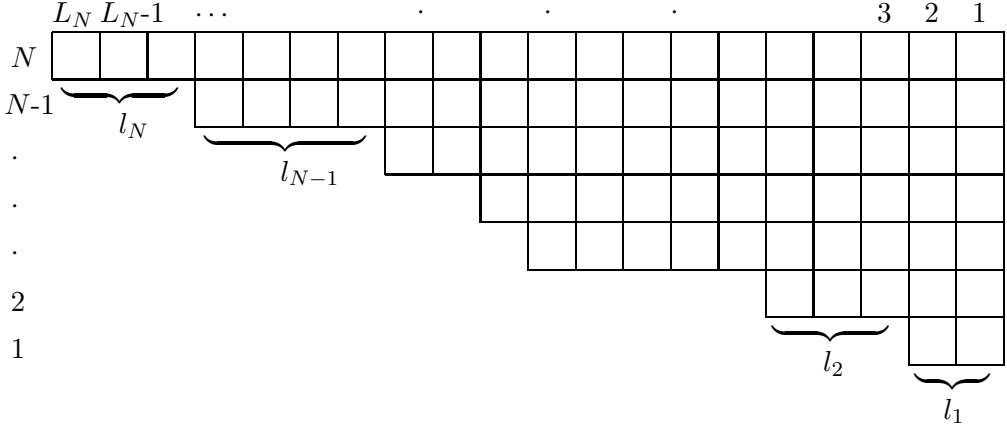
When we use the labeling in Figure \ref{fig:weightedB_N^l}, then
we denote the board by $^w\! B_{N}^{\mathbf l}$ and use $(i,j)^w$ to denote a
cell in row $i$ and column $j$ with respect to this labeling.

Given an $\mathbf l$-shifted Ferrers board
$B=B(a_1,\dots, a_N)\subseteq B_{N}^{\mathbf l}$
and a rook placement $P\in \mathcal{M}_k ^{(\mathbf l)} (B)$,
let $U_{B}^{\mathbf l}(P)$
denote the set of cells in $B - P$ which are not cancelled by
any rook of $P$.
Define
\begin{equation}\label{eqn:mwt}
wt_m(P)=\prod_{(i,j)^w\in U_{B} ^{\mathbf l} (P)}
w_{a,b;q,p}\left(i+j-1-l_i-2r_{(i,j)}(P)-s_{(i,j)}(P)\right),
\end{equation}
where the elliptic weight $w_{a,b;q,p}(l)$ of an integer $l$ is
defined in \eqref{def:smallelpwt}, $r_{(i,j)}(P)$ is the number of rooks
in $P$ positioned south-east
of $(i,j)^w$ such that the two columns cancelled by those rooks are
to the right of the column $j$, and $s_{(i,j)}(P)$ is the number of rooks
in $P$ which are in the south-east region of $(i,j)^w$ such that only
one cancelled column (the column containing the rook)
is to the right of column $j$. Then we define  
\begin{equation*}
m_k ^{(\mathbf l)}(a,b;q,p;B)=\sum_{P\in\mathcal{M}_k ^{\mathbf l}(P)}wt_m(P).
\end{equation*}

\begin{theorem}\label{thmpfm}
For any $\mathbf l$-shifted Ferrers board
$B=B(a_1,\dots, a_N)\subseteq B_{N}^{\mathbf l}$, we have 
\begin{align}\label{eqn:mthm}
&\prod_{i=1}^{N}[z+a_{N-i+1}-2i+2]_{aq^{2(L_{i-1}+i-1-a_{N-i+1})},
bq^{L_{i-1}+i-1-a_{N-i+1}};q,p}
\notag\\
&=\sum_{k=0}^N m_k ^{(\mathbf l)} (a,b;q,p;B)
\prod_{j=1}^{N-k}[z-2j+2]_{aq^{2(L_{j-1}+j-1)},bq^{L_{j-1}+j-1};q,p}.
\end{align}
\end{theorem}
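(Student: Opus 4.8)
The plan is to reproduce the double-counting argument that proves Theorem~\ref{thm:HR^l}, but with the plain statistic $q^{u}$ replaced by the elliptic weight $wt_m$ of \eqref{eqn:mwt}. As in that proof, it suffices to establish \eqref{eqn:mthm} for nonnegative integer values of $z$: for fixed generic parameters both sides are, as functions of $q^z$, ratios of products of theta functions of bounded quasi-period, hence determined by their values at infinitely many points, so the general identity follows by analytic continuation. First I would pass to the $w$-labeling of Figure~\ref{fig:weightedB_N^l} and extend ${}^w\!B_N^{\mathbf l}$ by $z$ columns, forming the $w$-labeled analogue of $B_{N,z}^{\mathbf l}$, keeping exactly the attacking and $\mathcal{N}$-cancellation rules from the proof of Theorem~\ref{thm:HR^l}. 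I would then extend $wt_m$ to placements $P\in\mathcal{N}_N(B_{N,z}^{\mathbf l})$ as the product of $w_{a,b;q,p}(i+j-1-l_i-2r_{(i,j)}(P)-s_{(i,j)}(P))$ over all cells that are neither occupied nor $\mathcal{N}$-cancelled, and evaluate $\sum_{P\in\mathcal{N}_N(B_{N,z}^{\mathbf l})}wt_m(P)$ in two different ways.

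For the left-hand side I would place the $N$ rooks row by row from the bottom up, as in Figure~\ref{fig:firstrow}. Fixing the lower rooks and sweeping the rook of the current row $i$ across its admissible positions (across the available cells of the row and through the extended columns), the key point is that the partial weighted sum telescopes: whenever the rook moves past a cell, the product of the small weights of the cells it vacates collapses by \eqref{eqn:wrelW} into a single big weight $W_{a,b;q,p}$, whose base parameters are governed by the shift rules \eqref{eqn:shifts}, and adjoining the next position is precisely an instance of the recursion \eqref{recellny}, $[z']_{a,b;q,p}=[y]_{a,b;q,p}+W_{a,b;q,p}(y)\,[z'-y]_{aq^{2y},bq^y;q,p}$. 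Running this over all positions in row $i$ collapses the whole row to the single factor $[z+a_{N-i+1}-2i+2]_{aq^{2(L_{i-1}+i-1-a_{N-i+1})},\,bq^{L_{i-1}+i-1-a_{N-i+1}};q,p}$, where the $-2i+2$ shift comes from the two cells cancelled in this row by each rook already placed below it and the base translation by $L_{i-1}+i-1-a_{N-i+1}$ comes from the accumulated column offset recorded by $r_{(i,j)}$ and $s_{(i,j)}$. Taking the product over $i=1,\dots,N$ produces the left-hand side of \eqref{eqn:mthm}.

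For the right-hand side I would, exactly as in the proof of Theorem~\ref{thm:HR^l}, fix $P\in\mathcal{M}_k^{\mathbf l}(B)$ and sum over all $P'\in\mathcal{N}_N(B_{N,z}^{\mathbf l})$ with $P'\cap B=P$. Consistency of the $\mathcal{N}$-cancellation with the $U_B^{\mathbf l}$-statistic ensures that the cells of $B$ contribute exactly $wt_m(P)$ independently of the extension, while the same telescoping applied to the $N-k$ rooks in the extended part contributes $\prod_{j=1}^{N-k}[z-2j+2]_{aq^{2(L_{j-1}+j-1)},\,bq^{L_{j-1}+j-1};q,p}$; summing over $P$ gives $m_k^{(\mathbf l)}(a,b;q,p;B)$ times this product, and summing over $k$ yields the right-hand side. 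The hard part will be the row-by-row telescoping in the elliptic setting: one must verify that the position-dependent arguments $i+j-1-l_i-2r_{(i,j)}(P)-s_{(i,j)}(P)$ of the small weights combine, under \eqref{eqn:wrelW} and the base shifts \eqref{eqn:shifts}, into exactly the arguments needed for \eqref{recellny} to apply at each step, so that the prescribed base parameters $aq^{2(\cdots)}$ and $bq^{\cdots}$ emerge. This is precisely where the intricate bookkeeping of $r_{(i,j)}(P)$ and $s_{(i,j)}(P)$, which track whether the columns cancelled by a lower rook lie to the right of the current cell, becomes essential; it is the genuinely elliptic feature with no counterpart in the plain-$q$ computation of Theorem~\ref{thm:HR^l}.
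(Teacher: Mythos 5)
Your overall architecture --- extend the board by $z$ columns, keep the $\mathcal{N}$-cancellation from the proof of Theorem~\ref{thm:HR^l}, and evaluate the weighted sum over $\mathcal{N}_N(B_{N,z}^{\mathbf l})$ once row by row and once by fixing $P'\cap B=P$ --- is exactly the paper's. But there is a genuine gap in how you weight the extended part: you propose to use the formula from \eqref{eqn:mwt}, with the \emph{same} statistics $r_{(i,j)}(P)$ and $s_{(i,j)}(P)$, on all uncancelled cells of $B_{N,z}^{\mathbf l}$. This fails. In the $w$-labeling of Figure~\ref{fig:weightedB_N^l} the columns of $B_N^{\mathbf l}$ are numbered from \emph{right to left}, so sweeping the bottom-row rook through $B$ produces uncancelled cells with arguments $1-a_N,2-a_N,\dots,0$; for the partial sums to keep telescoping via \eqref{eqn:wrelW}, \eqref{eqn:shifts} and \eqref{recellny} into $[z+a_N]_{aq^{-2a_N},bq^{-a_N};q,p}$, the arguments of the extension cells must continue this arithmetic progression as $1,2,\dots,z-1$ while the rook moves \emph{rightward} through the extension. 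Any labeling of the extension that continues the right-to-left convention of the $w$-labeling makes your formula produce \emph{decreasing} arguments there, and the products no longer assemble into big weights $W_{a,b;q,p}(\cdot)$, since \eqref{eqn:wrelW} builds these from small weights with consecutive increasing arguments.

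The paper's device, which your proposal is missing, is a separate labeling of $B_{N,z}^{\mathbf l}-B_N^{\mathbf l}$ (Figure~\ref{fig:weightedextlb}: columns $1,\dots,z$ from left to right, and row $i$ from the bottom carrying the label $L_i+i$) together with \emph{mirrored} statistics: an extended cell gets weight $w_{a,b;q,p}(i+j-1-l_i-2\tilde{r}_{(i,j)}(P)-\tilde{s}_{(i,j)}(P))$, where $\tilde{r}$ and $\tilde{s}$ count rooks of $(B_{N,z}^{\mathbf l}-B_N^{\mathbf l})\cap P$ lying to the \emph{south-west} whose cancelled columns lie to the \emph{left} of column $j$ --- not south-east with cancelled columns to the right, as for cells of $B$. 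This reversal is not mere notation: in the extension the sweep direction, and hence the set of cancellations already ``passed'' at a given cell (which is what the correction $-2\tilde{r}-\tilde{s}$ records), is opposite to that in $B$, and the row labels $L_i+i$ are calibrated precisely so that, as the paper remarks, the row sums ``continue to add up to an elliptic number,'' yielding $[z+a_{N-i+1}-2i+2]$ with the stated base shifts. Your final paragraph correctly identifies this telescoping as the crux, but the assertion that the unmodified $r,s$-bookkeeping extends to the whole board is exactly the step that would fail; once the weights on the extended part are repaired as above, the remainder of your argument coincides with the paper's proof.
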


\begin{proof}
It suffices to prove the theorem for nonnegative integer values of $z$.
The full result follows then by analytic continuation. 

The proof is similar to the proof of Theorem~\ref{thm:HR^l}. 
Here we also consider the extended board $B_{N,z}^{\mathbf l}$. However,
we use a different labeling for the sake of elliptic weight computation.
For the cells in $B_{N}^{\mathbf l}$, we use the labeling described in
Figure~\ref{fig:weightedB_N^l}, and for the extended part,
we use the labeling described in Figure \ref{fig:weightedextlb}.
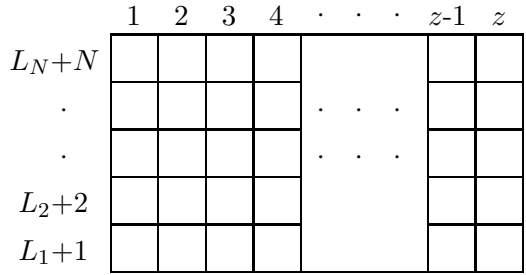
\begin{figure}[ht]
$$\begin{picture}(145,85)(0,0)
\multiput(10,15)(0,15){4}{\line(1,0){60}}
\multiput(10,0)(0,75){2}{\line(1,0){130}}
\multiput(10,0)(15,0){5}{\line(0,1){75}}
\multiput(110,0)(15,0){3}{\line(0,1){75}}
\multiput(110,15)(0,15){4}{\line(1,0){30}}
\put(-22,64){$L_N$+$N$}
\put(-6,34){$\cdot$}
\put(-6,49){$\cdot$}
\put(-19,19){$L_2$+2}
\put(-19,4){$L_1$+1}
\put(15, 78){1}
\put(30,78){2}
\put(45,78){3}
\put(60,78){4}
\put(75,79){$\cdot$}
\put(87,79){$\cdot$}
\put(99,79){$\cdot$}
\put(110,78){$z$-$1$}
\put(130,78){$z$}
\put(75,34){$\cdot$}
\put(87,34){$\cdot$}
\put(99,34){$\cdot$}
\put(75,49){$\cdot$}
\put(87,49){$\cdot$}
\put(99,49){$\cdot$}
\end{picture}$$
\caption{Labeling of the extended part in $B_{N,z}^{\mathbf l}$ for the
elliptic weights.}\label{fig:weightedextlb}
\end{figure}
We consider the rook placements in $\mathcal{N}_N (B_{N,z}^{\mathbf l})$.
We assume the same $\mathcal{N}$-cancellation as defined in the proof
of Theorem~\ref{thm:HR^l}. For a rook placement
$P\in \mathcal{N}_N (B_{N,z}^{\mathbf l})$, let $U_{\mathcal{N}}^{\mathbf l}(P)$
denote the set of cells in $B_{N,z}^{\mathbf l} - P$ which are not
$\mathcal{N}$-cancelled by any rooks in $P$. Consider the cells in
$U_{\mathcal{N}}^{\mathbf l}(P)$. Due to the inconsistency of the column labeling
in $B_{N}^{\mathbf l}$ and the extended part of $B_{N,z}^{\mathbf l}$,
we give slightly
different weights to the cells in $B\cap U_{\mathcal{N}}^{\mathbf l}(P)$ and
$(B_{N,z}^{\mathbf l} - B_{N}^{\mathbf l})\cap U_{\mathcal{N}}^{\mathbf l}(P)$.
That is, to the cells
in $B\cap U_{\mathcal{N}}^{\mathbf l}(P)$, we assign the elliptic weight as
defined in \eqref{eqn:mwt}. To the cells in
$(B_{N,z}^{\mathbf l} - B_{N}^{\mathbf l})\cap U_{\mathcal{N}}^{\mathbf l}(P)$, say to
$(i,j)^w\in (B_{N,z}^{\mathbf l} - B_{N}^{\mathbf l})\cap U_{\mathcal{N}}^{\mathbf l}(P)$
in terms of
the weight labeling described in Figure~\ref{fig:weightedextlb}, we assign
the weight $w_{a,b;q,p}(i+j-1-l_i-2\tilde{r}_{(i,j)}(P)-\tilde{s}_{(i,j)}(P))$,
where $\tilde{r}_{(i,j)}(P)$ is the number of rooks in
$(B_{N,z}^{\mathbf l} - B_{N}^{\mathbf l})\cap P$
which are in the south-west region of
$(i,j)^w$ and both of the columns cancelled by those rooks are to the
left of column $j$, and $\tilde{s}_{(i,j)}(P)$ is the number of rooks
in $(B_{N,z}^{\mathbf l} - B_{N}^{\mathbf l})\cap P$
which are in the south-west region of
$(i,j)^w$ with only one column cancelled by the respective rooks being
to the left of column $j$ (and so the other cancelled column is to the
right of column $j$). Then the product formula \eqref{eqn:mthm}
is the result of computing 
\begin{equation*}
\sum_{P\in\mathcal{N}_{N}(B_{N,z}^{\mathbf l})}\widetilde{wt}_m(P),
\end{equation*}
where 
\begin{align*}
\widetilde{wt}_m(P)=&\prod_{(i,j)^w \in B\cap U_{\mathcal{N}}^{\mathbf l}(P)}
w_{a,b;q,p}(i+j-1-l_i-2r_{(i,j)}(P)-s_{(i,j)}(P))\\
&\times \prod_{(i,j)^w \in (B_{N,z}^{\mathbf l} - B_{N}^{\mathbf l})
\cap U_{\mathcal{N}}^{\mathbf l}(P)}
w_{a,b;q,p}(i+j-1-l_i-2\tilde{r}_{(i,j)}(P)-\tilde{s}_{(i,j)}(P))
\end{align*}
in two different ways. We first place $N$ rooks row by row, starting
from the bottom row. Starting from the right-most cell in the bottom
row of B we move a rook to the left, and then start from the left-most
cell of the extended part and move to the right.
Again we refer to Figure~\ref{fig:firstrow} for a graphical example.
From these possible rook placements, we get the sum of weights
\begin{align*}
 &1+ w_{a,b;q,p}(1-a_N) + w_{a,b;q,p}(1-a_N) w_{a,b;q,p}(2-a_N)+
 \cdots +\prod_{i=1+l_1-a_N}^{l_1+z-1}w_{a,b;q,p}(i-l_1)\\
&=[z+a_N]_{aq^{-2a_N},bq^{-a_N};q,p}.
\end{align*}
We remark that the labeling of the extended part in $B_{N,z}^{\mathbf l}$
was defined so that the above sum continues to add up to an elliptic number.
Note that if the first rook $\textbf{r}_1$ is placed in a cell in
$B_N ^{\mathbf l}$, then it $\mathcal{N}$-cancels exactly two cells in
$B_N ^{\mathbf l}$ in
every row above the bottom row. If $\textbf{r}_1$ is placed in
$B_{N,z}^{\mathbf l} - B_{N}^{\mathbf l}$, then it $\mathcal{N}$-cancels
exactly two cells
in every row in $B_{N,z}^{\mathbf l} - B_{N}^{\mathbf l}$ above the bottom row.
Hence,
after placing a rook in the bottom row, the weight sum over all possible
rook placements in the second row from the bottom is of size
$z+a_{N-1}-2$. However, as it happens in the bottom case, we have to
shift $a$ and $b$ up to the coordinate of the first uncancelled cell
which depends on the difference between the length of the row $i$ and
the size of $a_{N-i+1}$, in general. Also, moving one row up shifts
$a$ by $q^2$ and $b$ by $q$. Thus, the weight sum coming from all
possible rook placements in the $i$-th row from the bottom becomes
$[z+a_{N-i+1}-2i+2]_{aq^{2(L_{i-1}+i-1-a_{N-i+1})},bq^{L_{i-1}+i-1-a_{N-i+1}};q,p}$,
and so finally we obtain 
$$
\sum_{P\in\mathcal{N}_{N} (B_{N,z}^{\mathbf l})}\widetilde{wt}_m(P)
=\prod_{i=1}^{N}
[z+a_{N-i+1}-2i+2]_{aq^{2(L_{i-1}+i-1-a_{N-i+1})},
bq^{L_{i-1}+i-1-a_{N-i+1}};q,p}.
$$

On the other hand, we can fix a placement
$P\in \mathcal{M}_k ^{\mathbf l}(B)$ and
consider the sum
$$\sum_{\substack{P'\in\mathcal{N}_{N}(B_{N,z}^{\mathbf l})\\ P'\cap B=P}}
\widetilde{wt}_m(P').$$
Then by the same reasoning used in the proof of
Theorem~\ref{thm:HR^l}, we obtain
\begin{align*}
\sum_{P\in\mathcal{N}_{N}(B_{N,z}^{\mathbf l})}\widetilde{wt}_m(P)&= 
\sum_{k=0}^N \sum_{P\in \mathcal{M}_k ^{\mathbf l}(B)}wt (P)
\prod_{j=1}^{N-k}[z-2j+2]_{aq^{2(L_{j-1}+j-1)},bq^{L_{j-1}+j-1};q,p}\\
&= \sum_{k=0}^n m_k ^{(\mathbf l)}(a,b;q,p;B)\prod_{j=1}^{N-k}
[z-2j+2]_{aq^{2(L_{j-1}+j-1)},bq^{L_{j-1}+j-1};q,p},
\end{align*}
as desired.
\end{proof}

The first corollary is a consequence of specializing the value $z=0$
in Theorem~\ref{thmpfm}.

\begin{corollary}
Given an $\mathbf l$-shifted Ferrers board
$B=B(a_1,\dots, a_N)\subseteq  B_N ^{\mathbf l}$, we have 
\begin{equation*}
m_N ^{(\mathbf l)}(a,b;q,p;B)=
\prod_{i=1}^{N}[a_{N-i+1}-2i+2]_{aq^{2(L_{i-1}+i-1-a_{N-i+1})},bq^{L_{i-1}+i-1-a_{N-i+1}};q,p}.
\end{equation*}
In particular, if $B=B_N ^{\mathbf l}$, then we have 
\begin{equation*}
m_N ^{(\mathbf l)}(a,b;q,p;B_N ^{\mathbf l})=
\prod_{i=1}^{N}[L_i-2i+2]_{aq^{2(i-1-l_i)},bq^{i-1-l_i};q,p}.
\end{equation*}
\end{corollary}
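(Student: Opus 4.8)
The plan is to derive both identities as the $z=0$ specialization of Theorem~\ref{thmpfm}, exactly as flagged in the sentence preceding the corollary. Since the product formula~\eqref{eqn:mthm} was established for all nonnegative integer values of $z$, the value $z=0$ is admissible directly, with no appeal to analytic continuation needed. Observe that after the substitution $z=0$ the left-hand side of~\eqref{eqn:mthm} becomes precisely $\prod_{i=1}^{N}[a_{N-i+1}-2i+2]_{aq^{2(L_{i-1}+i-1-a_{N-i+1})},bq^{L_{i-1}+i-1-a_{N-i+1}};q,p}$, which is the asserted formula for $m_N^{(\mathbf l)}(a,b;q,p;B)$. So the whole content of the first statement reduces to showing that the right-hand side of~\eqref{eqn:mthm} collapses to its single $k=N$ summand at $z=0$.

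First I would record the elementary vanishing $[0]_{a,b;q,p}=0$. Indeed, from the definition $[z]_{a,b;q,p}=\theta(q^z,aq^z,bq^2,a/b;p)/\theta(q,aq,bq^{z+1},aq^{z-1}/b;p)$, setting $z=0$ produces the factor $\theta(1;p)$ in the numerator, and $\theta(1;p)=\prod_{j\ge 0}((1-p^j)(1-p^{j+1}))=0$ because the $j=0$ term carries the factor $1-1$. Next I would inspect the product $\prod_{j=1}^{N-k}[z-2j+2]_{aq^{2(L_{j-1}+j-1)},bq^{L_{j-1}+j-1};q,p}$ on the right of~\eqref{eqn:mthm}. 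For every $k$ with $0\le k<N$ this product is nonempty and its $j=1$ factor is $[z-2+2]_{aq^{2(L_0+0)},bq^{L_0+0};q,p}=[z]_{a,b;q,p}$, using $L_0=0$; at $z=0$ this equals $[0]_{a,b;q,p}=0$, so every summand with $k<N$ dies. Only $k=N$ survives, and there the product over $j$ is empty, hence equals $1$. Equating the surviving term $m_N^{(\mathbf l)}(a,b;q,p;B)$ with the unchanged left-hand side yields the first formula.

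The ``in particular'' statement is then the instance of the first formula for the maximal $\mathbf l$-shifted Ferrers board $B=B_N^{\mathbf l}$. The only remaining point is to read off its column-height vector from the geometry of Figure~\ref{fig:weightedB_N^l}: the full board is the $\mathbf l$-shifted Ferrers board with $a_{N-i+1}=L_i$ for $1\le i\le N$ (equivalently $a_j=L_{N-j+1}$), which satisfies the boundary condition $a_i-a_{i+1}=l_{N+1-i}$ with equality. Substituting $a_{N-i+1}=L_i$ turns each factor of the first formula into $[L_i-2i+2]_{\dots}$, and the subscript simplifies through $L_{i-1}+i-1-a_{N-i+1}=L_{i-1}+i-1-L_i=i-1-l_i$, since $L_i-L_{i-1}=l_i$, giving $\prod_{i=1}^{N}[L_i-2i+2]_{aq^{2(i-1-l_i)},bq^{i-1-l_i};q,p}$ as claimed.

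I do not expect a genuine obstacle here: the argument is a clean specialization of an already-proven identity. The two points deserving care are the vanishing $[0]_{a,b;q,p}=0$, which rests on $\theta(1;p)=0$, and the combinatorial identification $a_{N-i+1}=L_i$ of the maximal board's heights. The latter index book-keeping, together with the reindexing $L_{i-1}+i-1-L_i=i-1-l_i$, is the step most prone to an off-by-one slip and is the only thing worth double-checking.
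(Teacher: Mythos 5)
Your proof is correct and follows exactly the route the paper intends: the paper's entire justification is the one-line remark that the corollary results from setting $z=0$ in Theorem~\ref{thmpfm}, and you have simply filled in the details (the vanishing $[0]_{a,b;q,p}=0$ via $\theta(1;p)=0$ killing every $k<N$ summand through its $j=1$ factor, and the identification $a_{N-i+1}=L_i$ with the simplification $L_{i-1}+i-1-L_i=i-1-l_i$ for the full board), all of which check out.
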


The next corollary concerns the case $\mathbf l=(1,\dots,1)$
and $N=2n-1$ of Theorem~\ref{thmpfm} which gives an elliptic 
analogue of Theorem \ref{thm:HR}.
For the case $\mathbf l=(1,\dots,1)$,
we use $m_k (a,b;q,p;B)$ to denote $m_k ^{(1,1,\dots, 1)}(a,b;q,p;B)$.
\begin{corollary}\label{cor:shiftedmat}
Given a shifted Ferrers board $B=B(a_1,\dots, a_{2n-1})\subseteq B_{2n}$, 
we have 
\begin{align}\label{eqn:mthm1}
&\prod_{i=1}^{2n-1}[z+a_{2n-i}-2i+2]_{aq^{2(2i-2-a_{2n-i})},bq^{2i-2-a_{2n-i}};q,p}\notag\\
&=\sum_{k=0}^n m_k (a,b;q,p;B)\prod_{j=1}^{2n-1-k}[z-2j+2]_{aq^{4j-4},bq^{2j-2};q,p}.
\end{align}
\end{corollary}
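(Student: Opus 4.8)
The plan is to obtain Corollary~\ref{cor:shiftedmat} as the direct specialization of Theorem~\ref{thmpfm} to the vector $\mathbf l=(1,\dots,1)$ of length $N=2n-1$. Apart from one observation about the range of the summation index, the argument is pure substitution and requires no further combinatorial input.

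First I would record how the partial sums $L_j$ degenerate under this choice. Since every $l_s=1$, the defining relation $L_j=\sum_{s=1}^j l_s$ collapses to $L_j=j$, so in particular $L_{i-1}=i-1$ and $L_{j-1}=j-1$. Feeding these into the parameter exponents of \eqref{eqn:mthm} is then mechanical. On the left-hand side one has $a_{N-i+1}=a_{2n-i}$ and the exponent simplifies to $L_{i-1}+i-1-a_{N-i+1}=2(i-1)-a_{2n-i}=2i-2-a_{2n-i}$, which reproduces the bases $aq^{2(2i-2-a_{2n-i})}$ and $bq^{2i-2-a_{2n-i}}$ of \eqref{eqn:mthm1}. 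On the right-hand side the exponent becomes $L_{j-1}+j-1=2(j-1)=2j-2$, yielding the bases $aq^{4j-4}$ and $bq^{2j-2}$ as claimed. At this stage one has exactly \eqref{eqn:mthm1} except that the outer sum still runs over $0\le k\le N=2n-1$ rather than $0\le k\le n$.

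The only step that is an argument rather than bookkeeping is this truncation of the summation range. It follows from the combinatorial identification recorded in the Remark after Theorem~\ref{thm:HR^l}: for $\mathbf l=(1,\dots,1)$ and $N=2n-1$ the board $B_N^{\mathbf l}$ is precisely the shifted board $B_{2n}$, so placements of nonattacking rooks correspond to matchings of the complete graph $K_{2n}$. Since any matching of $K_{2n}$ consists of at most $n$ pairwise disjoint edges, no placement of more than $n$ nonattacking rooks exists; hence $\mathcal M_k(B)=\emptyset$, and therefore $m_k(a,b;q,p;B)=0$, for every $k>n$. Discarding these vanishing terms from the specialized identity leaves precisely \eqref{eqn:mthm1}. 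I anticipate no genuine obstacle in this proof; the one place demanding a little care is keeping the index shift $a_{N-i+1}=a_{2n-i}$ aligned when matching the left-hand factors.
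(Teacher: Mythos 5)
Your proposal is correct and takes essentially the same route as the paper: the paper states Corollary~\ref{cor:shiftedmat} as the immediate specialization $\mathbf l=(1,\dots,1)$, $N=2n-1$ of Theorem~\ref{thmpfm} (so $L_j=j$ and the exponents collapse exactly as you compute), with the truncation of the sum to $k\le n$ resting on the same observation made in the remark after Theorem~\ref{thm:HR^l}, namely that at most $n$ nonattacking rooks can be placed on $B_{2n}$, so $m_k(a,b;q,p;B)=0$ for $k>n$.
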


The following result concerns the elliptic enumeration of
(perfect) matchings on $K_{2n}$.
\begin{corollary}\label{cor:pmthm}
Given a shifted Ferrers board $B=B(a_1,\dots, a_{2n-1})\subseteq B_{2n}$,
we have
\begin{equation*}\label{eqn:pmthm}
 m_n(a,b;q,p;B)=\frac{\prod_{i=1}^{2n-1}[a_{2n-i}+2n-2i]_{aq^{2(2i-2-a_{2n-i})},
bq^{2i-2-a_{2n-i}};q,p}}{\prod_{i=1}^{n-1}[2n-2i]_{aq^{4i-4},bq^{2i-2};q,p}}.
\end{equation*}
In particular, for the full shifted Ferrers board
$B_{2n}=B(2n-1,2n-2,\dots,1)$ we have
\begin{align*}
 m_n(a,b;q,p;B_{2n}){}&=\frac{\prod_{i=1}^{2n-1}[2n-i]_{aq^{2i-4},
bq^{i-2};q,p}}{\prod_{i=1}^{n-1}[2n-2i]_{aq^{4i-4},bq^{2i-2};q,p}}\notag\\
&=[2n-1]_{aq^{-2},bq^{-1};q,p}[2n-3]_{aq^2,bq;q,p}\dots[1]_{aq^{4n-6},bq^{2n-3};q,p}.
\end{align*}
\end{corollary}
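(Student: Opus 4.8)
The plan is to derive Corollary~\ref{cor:pmthm} directly from Corollary~\ref{cor:shiftedmat} by a single judicious specialization of the free variable $z$. Equation~\eqref{eqn:mthm1} already expresses the product $\prod_{i=1}^{2n-1}[z+a_{2n-i}-2i+2]_{\dots}$ as the sum $\sum_{k=0}^{n}m_k(a,b;q,p;B)\prod_{j=1}^{2n-1-k}[z-2j+2]_{\dots}$, and since this is an identity in $z$ (established for nonnegative integers $z$ and extended by analytic continuation), I would evaluate it at an integer value of $z$ chosen so that every term on the right except the $k=n$ term is killed.

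First I would set $z=2n-2$. For each $k$ with $0\le k\le n-1$ the index $j=n$ lies in the range $1\le j\le 2n-1-k$, so the corresponding factor is $[z-2n+2]_{\dots}=[0]_{\dots}$; because the $j=0$ factor of $\theta(1;p)$ vanishes we have $[0]_{a,b;q,p}=0$, and thus every term with $k\le n-1$ drops out. Only the $k=n$ term survives, and its product $\prod_{j=1}^{n-1}[2n-2j]_{aq^{4j-4},bq^{2j-2};q,p}$ has no vanishing factor since all arguments $2n-2j$ are positive. The left-hand side at $z=2n-2$ collapses to $\prod_{i=1}^{2n-1}[a_{2n-i}+2n-2i]_{aq^{2(2i-2-a_{2n-i})},bq^{2i-2-a_{2n-i}};q,p}$, and dividing by the surviving denominator product yields the first claimed formula for $m_n(a,b;q,p;B)$. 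The division is legitimate because the denominator factors are generically nonvanishing elliptic numbers, and the resulting identity of functions holds wherever they are nonzero.

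For the full board $B_{2n}=B(2n-1,\dots,1)$ I would substitute $a_i=2n-i$, so that $a_{2n-i}=i$; the numerator factors become $[2n-i]_{aq^{2i-4},bq^{i-2};q,p}$, giving the intermediate expression in the statement. The final simplification is then pure index-bookkeeping via the even/odd splitting of $i$: the even indices $i=2m$ ($1\le m\le n-1$) produce exactly the factors $[2n-2m]_{aq^{4m-4},bq^{2m-2};q,p}$, which coincide termwise with the denominator and cancel, while the odd indices $i=2m-1$ ($1\le m\le n$) yield $[2n-2m+1]_{aq^{4m-6},bq^{2m-3};q,p}$, that is, the product running from $[2n-1]_{aq^{-2},bq^{-1};q,p}$ down to $[1]_{aq^{4n-6},bq^{2n-3};q,p}$.

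There is no serious obstacle here; the proof is essentially a specialization followed by elementary cancellation. The only two points demanding care are confirming that $z=2n-2$ annihilates \emph{precisely} the terms with $k<n$ (which rests on $[0]_{a,b;q,p}=0$ and on the range condition $2n-1-k\ge n$), and tracking the base-parameter shifts $aq^{2i-4}$, $bq^{i-2}$ through the even/odd partition so that the even-indexed numerator factors match the denominator exactly, leaving the advertised odd-indexed product.
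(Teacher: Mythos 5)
Your proposal is correct and follows essentially the same route as the paper: the paper's proof likewise specializes $z\to 2n-2$ in Corollary~\ref{cor:shiftedmat}, observes that $\prod_{j=1}^{2n-1-k}[2n-2j]_{aq^{4j-4},bq^{2j-2};q,p}=0$ for $1\le k<n$ (so only the $k=n$ term survives on the right-hand side of \eqref{eqn:mthm1}), and then simplifies. Your explicit justification that $[0]_{a,b;q,p}=0$ and your even/odd index bookkeeping for the full board $B_{2n}$ merely spell out the ``simplification'' step that the paper leaves implicit.
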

\begin{proof}
In Corollary~\ref{cor:shiftedmat} we let $z\to 2n-2$.
Since
\begin{equation*}
\prod_{j=1}^{2n-1-k}[2n-2j]_{aq^{4j-4},bq^{2j-2};q,p}=0,
\end{equation*}
for $1\le k<n$,
the right-hand side of \eqref{eqn:mthm1} reduces to a single term only,
corresponding to $k=n$.
Simplification then yields the result.
\end{proof}

Similarly, the case when $\mathbf l=(1,\dots,1)$
and $N=2n$ of Theorem~\ref{thmpfm} gives the following result.
\begin{corollary}\label{cor:shiftedmat1}
Given a shifted Ferrers board $B=B(a_1,\dots, a_{2n})\subseteq B_{2n+1}=
B(2n,2n-1,\dots,1)$, we have
\begin{align}\label{eqn:mthm2}
&\prod_{i=1}^{2n}[z+a_{N-i+1}-2i+2]_{aq^{2(2i-2-a_{N-i+1})},
bq^{2i-2-a_{N-i+1}};q,p}
\notag\\
&=\sum_{k=0}^{2n} m_k(a,b;q,p;B)
\prod_{j=1}^{2n-k}[z-2j+2]_{aq^{4j-4},bq^{2j-2};q,p}.
\end{align}
\end{corollary}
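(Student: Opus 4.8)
The plan is to obtain \eqref{eqn:mthm2} as the direct specialization of Theorem~\ref{thmpfm} to the constant vector $\mathbf l=(1,\dots,1)$ of length $N=2n$. First I would record that, with every $l_s=1$, the partial sums defined just before Theorem~\ref{thm:HR^l} collapse to $L_j=\sum_{s=1}^j l_s=j$, so that in particular $L_{i-1}=i-1$ and $L_{j-1}=j-1$. Exactly as in the $N=2n-1$ case, where $B_N^{\mathbf l}$ became $B_{2n}$, this choice makes the $\mathbf l$-shifted board $B_N^{\mathbf l}$ coincide with the ordinary shifted board $B_{2n+1}=B(2n,2n-1,\dots,1)$, and the lazy graph $K_{L_N+1}^{\mathbf l}$ becomes the full complete graph $K_{2n+1}$.

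Next I would substitute $L_{i-1}=i-1$ and $L_{j-1}=j-1$ into the base and nome shifts appearing in Theorem~\ref{thmpfm}. On the left-hand side this turns the shift exponents into $2(L_{i-1}+i-1-a_{N-i+1})=2(2i-2-a_{N-i+1})$ and $L_{i-1}+i-1-a_{N-i+1}=2i-2-a_{N-i+1}$; on the right-hand side it turns them into $2(L_{j-1}+j-1)=4j-4$ and $L_{j-1}+j-1=2j-2$. Inserting these simplified exponents, together with $N=2n$, into the product formula of Theorem~\ref{thmpfm} reproduces \eqref{eqn:mthm2} factor by factor, with $m_k^{(\mathbf l)}(a,b;q,p;B)$ written simply as $m_k(a,b;q,p;B)$ in accordance with the notational convention fixed before Corollary~\ref{cor:shiftedmat}.

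There is essentially no obstacle here: the identity is a routine reading off of a special case, entirely parallel to Corollaries~\ref{cor:shiftedmat} and~\ref{cor:pmthm}. The only point worth flagging concerns the summation range. Theorem~\ref{thmpfm} sums over $0\le k\le N$, which for $N=2n$ gives $0\le k\le 2n$, and this is what \eqref{eqn:mthm2} records verbatim. Since a $k$-rook placement on $B_{2n+1}$ is a $k$-matching of $K_{2n+1}$, one necessarily has $k\le n$, so that $m_k(a,b;q,p;B)=0$ whenever $k>n$. The terms with $k>n$ therefore contribute nothing, and the stated identity is valid as written even though its sum is not truncated at $n$.
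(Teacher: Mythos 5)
Your proposal is correct and coincides with the paper's own (implicit) argument: the corollary is stated there precisely as the specialization $\mathbf l=(1,\dots,1)$, $N=2n$ of Theorem~\ref{thmpfm}, with $L_j=j$ simplifying the shift exponents exactly as you compute. Your added remark that $m_k(a,b;q,p;B)=0$ for $k>n$ (so the sum to $2n$ is harmless) is a correct and sensible clarification, consistent with how the paper treats the range in Corollary~\ref{cor:shiftedmat}.
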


As a special case, we obtain an explicit expression for the
elliptic enumeration of (maximal) matchings on $K_{2n+1}$.
\begin{corollary}
Given a shifted Ferrers board $B=B(a_1,\dots, a_{2n})\subseteq B_{2n+1}$,
we have
\begin{equation*}\label{eqn:pmthm2}
 m_n(a,b;q,p;B)=\frac{\prod_{i=1}^{2n}[a_{2n-i+1}+2n-2i+2]_{aq^{2(2i-2-a_{2n-i+1})},
bq^{2i-2-a_{2n-i+1}};q,p}}{\prod_{i=1}^{n}[2n-2i+2]_{aq^{4i-4},bq^{2i-2};q,p}}.
\end{equation*}
In particular, for the full shifted Ferrers board
$B=B_{2n+1}=B(2n,2n-1,\dots,1)$ we have
\begin{align*}
 m_n(a,b;q,p;B_{2n+1}){}&=\frac{\prod_{i=1}^{2n}[2n-i+2]_{aq^{2i-4},
bq^{i-2};q,p}}{\prod_{i=1}^{n}[2n-2i+2]_{aq^{4i-4},bq^{2i-2};q,p}}\notag\\
&=[2n+1]_{aq^{-2},bq^{-1};q,p}[2n-1]_{aq^2,bq;q,p}\dots[3]_{aq^{4n-6},bq^{2n-3};q,p}.
\end{align*}
\end{corollary}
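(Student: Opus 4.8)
The plan is to derive this corollary from Corollary~\ref{cor:shiftedmat1} by specializing $z$ to a value that collapses the sum on the right-hand side of \eqref{eqn:mthm2} to a single term, exactly in the spirit of the proof of Corollary~\ref{cor:pmthm}. Concretely, I would set $z=2n$. With this choice the factor indexed by $j=n+1$ in any product $\prod_{j=1}^{2n-k}[z-2j+2]_{aq^{4j-4},bq^{2j-2};q,p}$ becomes $[2n-2(n+1)+2]_{aq^{4n},bq^{2n};q,p}=[0]_{aq^{4n},bq^{2n};q,p}$, which vanishes because $[0]_{a,b;q,p}=0$ (indeed $\theta(1;p)=\prod_{j\ge0}((1-p^j)(1-p^{j+1}))=0$, so the numerator of the defining expression for $[0]_{a,b;q,p}$ is zero). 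Since $j=n+1$ lies in the range $1\le j\le 2n-k$ precisely when $k\le n-1$, every product with $1\le k<n$ is annihilated, whereas for $k=n$ the range is only $1\le j\le n$ and the offending factor is absent, so that term survives.

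Next I would argue that the term $k=n$ is the \emph{only} surviving one. The products attached to $k>n$ do not vanish at $z=2n$ (their index range no longer reaches $j=n+1$), so here one must invoke the combinatorial input that $m_k(a,b;q,p;B)=0$ for $k>n$: a matching of $K_{2n+1}$ has at most $n$ edges, hence $\mathcal{M}_k^{\mathbf l}(B)$ is empty for $k>n$ when $\mathbf l=(1,\dots,1)$. Thus at $z=2n$ equation \eqref{eqn:mthm2} reduces to
$$\prod_{i=1}^{2n}[a_{2n-i+1}+2n-2i+2]_{aq^{2(2i-2-a_{2n-i+1})},bq^{2i-2-a_{2n-i+1}};q,p}=m_n(a,b;q,p;B)\prod_{j=1}^{n}[2n-2j+2]_{aq^{4j-4},bq^{2j-2};q,p},$$
and dividing by the surviving product yields the first displayed formula.

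For the ``in particular'' part I would substitute $a_i=2n-i+1$ for the full board $B_{2n+1}=B(2n,2n-1,\dots,1)$, so that $a_{2n-i+1}=i$; the numerator then simplifies to $\prod_{i=1}^{2n}[2n-i+2]_{aq^{2i-4},bq^{i-2};q,p}$. The remaining, genuinely bookkeeping-heavy, step is to match factors: the denominator $\prod_{i=1}^{n}[2n-2i+2]_{aq^{4i-4},bq^{2i-2};q,p}$ coincides termwise with the even-indexed numerator factors $i=2,4,\dots,2n$, so they cancel and leave exactly the odd-indexed factors $i=1,3,\dots,2n-1$, giving $[2n+1]_{aq^{-2},bq^{-1};q,p}[2n-1]_{aq^2,bq;q,p}\cdots[3]_{aq^{4n-6},bq^{2n-3};q,p}$. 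The only real obstacle is this final index-matching: one must check carefully that the shifts of $a$ and $b$ (not merely the elliptic arguments) agree between the even numerator factors and the denominator factors, which is exactly what makes the cancellation exact rather than merely formal.
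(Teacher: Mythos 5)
Your proposal is correct and follows essentially the same route as the paper: the paper also specializes $z\to 2n$ in Corollary~\ref{cor:shiftedmat1}, observes that the right-hand side of \eqref{eqn:mthm2} collapses to the single term $k=n$, and simplifies. The only difference is that you make explicit the vanishing $m_k(a,b;q,p;B)=0$ for $k>n$ (since matchings of $K_{2n+1}$ have at most $n$ edges), a point the paper leaves implicit even though the sum in \eqref{eqn:mthm2} runs to $k=2n$; your index-matching for the cancellation in the full-board case checks out exactly as stated.
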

\begin{proof}
In Corollary~\ref{cor:shiftedmat1} we let $z\to 2n$.
As in the proof of Corollary~\ref{cor:pmthm}
the right-hand side of \eqref{eqn:mthm2} then reduces to a single term only,
corresponding to $k=n$.
Simplification then yields the result.
\end{proof}

When the board is the full $\mathbf l$-shifted Ferrers board
$B=B_N ^{\mathbf l}$, the elliptic matchings number
$m_k ^{(\mathbf l)}(a,b;q,p;B_N ^{\mathbf l})$
satisfies the following recursion which can be proved by considering
whether there is a rook or not in the top row.

\begin{proposition}\label{prop:mkrecur}
 \begin{align*}
 m_k ^{(\mathbf l)}(a,b;q,p;B_N ^{\mathbf l})={}&
[L_N -2k+2]_{aq^{2(N-1-l_N),bq^{N-1-l_N}};q,p}
m_{k-1} ^{(\mathbf l)}(a,b;q,p;B_{N-1} ^{\mathbf l'})\notag\\
& +W_{aq^{2(N-1-l_N),bq^{N-1-l_N}};q,p}(L_N -2k)
m_k ^{(\mathbf l)}(a,b;q,p;B_{N-1} ^{\mathbf l'}),
 \end{align*}
 where $\mathbf l'=(l_1,\dots, l_{N-1})$ and $B_{N-1} ^{\mathbf l'}$
is the board obtained by 
 removing the top row from $B_{N} ^{\mathbf l}$.
\end{proposition}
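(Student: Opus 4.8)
The plan is to imitate the case analysis used in the proof of Proposition~\ref{prop:elptrecur}, now splitting according to whether the top row of the full board carries a rook, and to read off the two coefficients from the elliptic weight of that row. Throughout I use the elliptic labeling of Figure~\ref{fig:weightedB_N^l}, so that the top row is row $N$, it consists of the $L_N$ cells $(N,1)^w,\dots,(N,L_N)^w$, and deleting it produces exactly the full board $B_{N-1}^{\mathbf l'}$ with $\mathbf l'=(l_1,\dots,l_{N-1})$ (the lower rows occupy the same weight-columns $1,\dots,L_{N-1}$ in both boards, so their cell weights are unchanged). A rook in the top row is an edge $\{1,j\}$ whose smaller endpoint is the minimal vertex $1$; hence the part of the cancellation rule affecting cells $(t,j),(t,i)$ with $t<i$ produces nothing, and such a rook cancels only the top-row cells lying to its left. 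In particular it cancels no cell in any lower row and leaves the south-east statistics $r_{(i,j)},s_{(i,j)}$ of every lower cell untouched, so for any placement $P=P'\cup(\text{top-row content})$ the weight $wt_m$ factors as the weight of $P'$ on $B_{N-1}^{\mathbf l'}$ times the product of the $w$-weights of the uncancelled top-row cells.

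The crux is that this top-row factor telescopes. For a surviving top-row cell $(N,j)^w$, the quantity $2r_{(N,j)}(P)+s_{(N,j)}(P)$ in \eqref{eqn:mwt} equals precisely the number of top-row cells to the right of column $j$ that are cancelled by the lower rooks, so the argument of its weight, namely $N+j-1-l_N-2r_{(N,j)}-s_{(N,j)}$, simplifies to $(N-1-l_N)+t$, where $t-1$ is the number of uncancelled top-row cells lying to its right. Ordering the uncancelled top-row cells from right to left and applying \eqref{wshift} with shift $N-1-l_N$ followed by \eqref{eqn:wrelW}, their weights collapse to
$$\prod_{t=1}^{p}w_{a,b;q,p}\bigl((N-1-l_N)+t\bigr)=\prod_{t=1}^{p}w_{aq^{2(N-1-l_N)},bq^{N-1-l_N};q,p}(t)=W_{aq^{2(N-1-l_N)},bq^{N-1-l_N};q,p}(p),$$
where $p$ is the number of uncancelled cells in the top row.

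First I treat the empty-top-row case. A placement $P'$ of $k$ rooks on $B_{N-1}^{\mathbf l'}$ cancels exactly its $2k$ distinct endpoint-columns in the top row, so $p=L_N-2k$ cells survive and the top-row factor is $W_{aq^{2(N-1-l_N)},bq^{N-1-l_N};q,p}(L_N-2k)$; summing over $P'$ yields the second term of the recursion, $W_{aq^{2(N-1-l_N)},bq^{N-1-l_N};q,p}(L_N-2k)\,m_k^{(\mathbf l)}(a,b;q,p;B_{N-1}^{\mathbf l'})$. For the occupied case I fix a placement $P'$ of $k-1$ rooks and note that, since $P'$ uses $2(k-1)$ top-row columns, exactly $L_N-2k+2$ top-row cells are uncancelled and hence available for the top rook, each choice completing $P'$ to a valid matching; this count is independent of which columns $P'$ occupies, which is what makes the contribution factor cleanly. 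If the top rook sits in the $m$-th available cell from the left, the surviving cells are the $L_N-2k+2-m$ available cells to its right, contributing $W_{aq^{2(N-1-l_N)},bq^{N-1-l_N};q,p}(L_N-2k+2-m)$. Summing over $m$ and using $[n]_{a,b;q,p}=\sum_{s=0}^{n-1}W_{a,b;q,p}(s)$ from \eqref{recelln} with $n=L_N-2k+2$ produces the elliptic number $[L_N-2k+2]_{aq^{2(N-1-l_N)},bq^{N-1-l_N};q,p}$, and summing over $P'$ gives the first term.

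I expect the main obstacle to be the bookkeeping of the second paragraph: verifying that $2r_{(N,j)}+s_{(N,j)}$ counts exactly the lower-cancelled top-row cells to the right of $j$ (this is where the two-regime definitions of $r_{(i,j)}$ and $s_{(i,j)}$ must be checked carefully against the right-to-left column labeling), so that the weight of each surviving cell depends only on its rank among the survivors and the products genuinely telescope. Once this is confirmed, both coefficients fall out immediately from \eqref{eqn:wrelW}, \eqref{wshift}, and \eqref{recelln}, and the boundary conventions ($m_{k}^{(\mathbf l)}=0$ for $k<0$) handle the extreme values of $k$.
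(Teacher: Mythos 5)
Your proof is correct and takes exactly the approach the paper intends: the paper offers no written proof of this proposition beyond the remark that it ``can be proved by considering whether there is a rook or not in the top row,'' and your argument is a faithful elaboration of that case split. Your key bookkeeping step --- that for a surviving top-row cell $(N,j)^w$ the quantity $2r_{(N,j)}(P)+s_{(N,j)}(P)$ counts precisely the top-row cells to the right of column $j$ cancelled by the lower rooks, so the surviving weights telescope via \eqref{wshift} and \eqref{eqn:wrelW} to a single big weight, and the sum over the $L_N-2k+2$ admissible positions of a top rook collapses to an elliptic number by \eqref{recelln} --- checks out in every detail and supplies exactly what the paper leaves to the reader.
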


\begin{remark}
In the limit case $p\to 0$ and $b\to 0$, for the shifted Ferrers board
$B_{2n}=B(2n-1, 2n-2,\dots, 2,1)$,
$m_k(a,q;B_{2n}):=m_k(a,0;q,0;B_{2n})$ has a closed form
\begin{equation*}
 m_k(a,q;B_{2n})=q^{k^2-\binom{2n}{2}}\begin{bmatrix}2n\\2k\end{bmatrix}_q 
 \prod_{j=1}^k [2j-1]_q \frac{(aq^{4n-2k-3};q^2)_{2n-k-1}}{(aq^{-1};q^4)_{2n-k-1}}
\end{equation*}
which can be proved by the recursion in Proposition \ref{prop:mkrecur}.

If we let $a\to \infty$ in $m_k(a,q;B_{2n})$, then we obtain 
$$m_k(q;B_{2n})=q^{\binom{2n-2k}{2}}\begin{bmatrix}2n\\2k\end{bmatrix}_q 
 \prod_{j=1}^k [2j-1]_q, $$
which is a $q$-analogue of the $k$-matching number $\binom{2n}{2k}k!!$
of the complete graph $K_{2n}$.
\end{remark}




\begin{thebibliography}{99}

\bibitem{BR} K.~S.~Briggs and J.~B.~Remmel,
``A $p,q$-analogue of a formula of {F}robenius",
{\em Electron.\ J.\ Combin.\ }\textbf{10} (2003), \#R9.

\bibitem{D} M.\ Dworkin,
``An interpretation for Garsia and Remmel's $q$-hit numbers",
{\em J.\ Combin.\ Theory Ser.\ A} \textbf{81} (1998), 149--175.

\bibitem{Ding1} K.\ Ding,
``Rook placements and cellular decomposition of partition varieties",
{\em Discrete Math.\ }\textbf{170} (1997), 107--151.

\bibitem{Ding2} K.\ Ding, 
``Rook placements and generalized partition varieties",
{\em Discrete Math.\ }\textbf{176} (1997), 63--95.

\bibitem{GR} A.~M.~Garsia and J.~B.~Remmel,
``$Q$-counting rook configurations and a formula of Frobenius'',
{\em J.\ Combin.\ Theory Ser.\ A} \textbf{41} (1986), 246--275.

\bibitem{GRhyp} G.~Gasper and M.~Rahman,
{\em Basic hypergeometric series}, second edition,
Encyclopedia of Mathematics and Its Applications~\textbf{96},
Cambridge University Press, Cambridge, 2004.

\bibitem{Ge} I.~M.~Gessel, 
``Generalized rook polynomials and orthogonal polynomials",
in ``$q$-Series and Partitions",
{\em IMA Volumes in Mathematics and Its Applications},
Springer-Verlag, New York, (1989), 159--176.

\bibitem{GH} J.~Goldman and J.~Haglund,
``Generalized rook polynomials",
{\em J.\ Combin.\ Theory Ser.\ A} \textbf{91} (2000), 509--530.

\bibitem{GJW4} J.~R.~Goldman, J.~T.~Joichi and D.~E.~White,
``Rook theory. IV. Orthogonal sequences of rook polynomials",
{\em Stud.\ Appl.\ Math} \textbf{56} (1976/1977), 267--272.

\bibitem{H0} J.~Haglund,
``Rook theory and hypergeometric series",
{\em Adv.\ Appl.\ Math.\ }\textbf{17} (1996), 408--459.

\bibitem{HR} J.~Haglund and J.~B.~Remmel,
``Rook theory for perfect matchings",
{\em Adv.\ Appl.\ Math.\ }\textbf{27} (2001), 438--481.

\bibitem{KR} I.~Kaplansky and J.~Riordan,
``The problem of the rooks and its applications'',
{\em Duke Math.\ J.\ }\textbf{13} (1946), 259--268.

\bibitem{McR} B.~K.~Miceli and J.\ Remmel,
``Augmented rook boards and general product formulas'',
{\em Electronic J.\ Combin.\ }\textbf{15} (2008), \#R85.

\bibitem{RW} J.~B.~Remmel and M.~Wachs,
``Rook theory, generalized Stirling numbers and $(p,q)$-analogues'',
{\em Electronic J.\ Combin.\ }\textbf{11} (2004), \#R84.

\bibitem{Schl0}  M.~J.~Schlosser,
``Elliptic enumeration of nonintersecting lattice paths'',
{\em J.\ Combin.\ Theory Ser.\ A} \textbf{114} (2007), 505--521.

\bibitem{Schl1}  M.~J.~Schlosser,
``A noncommutative weight-dependent generalization of the binomial theorem'',
preprint \texttt{arXiv:1106.2112}.

\bibitem{SY0}  M.~J.~Schlosser and M.~Yoo,
``Elliptic rook and file numbers'',
preprint \texttt{arXiv:1512.01720}.

\bibitem{SY1}  M.~J.~Schlosser and M.~Yoo,
``An elliptic extension of the general product formula
for augmented rook boards'',
Europ.\ J.\ Combin., to appear;
preprint \texttt{arXiv:1601.07834}.

\bibitem{Sp} V.~P.~Spiridonov, 
\textit{Theta hypergeometric series},
in V.~A.~Malyshev and A.~M.~Vershik (eds.),
\textit{Asymptotic Combinatorics with Applications to Mathematical Physics},
Kluwer Acad.\ Publ., Dordrecht, 2002, pp.~307--327.

\bibitem{WaW} M.\ Wachs and D.\ White,
``$p,q$-{S}tirling numbers and set partition statistics",
{\em J.\ Combin.\ Theory Ser.\ A} \textbf{56}, (1991), 27--46.

\bibitem{W} H.\ Weber,
\textit{Elliptische Functionen und Algebraische Zahlen}, Vieweg-Verlag,
Braunschweig, 1897.

\bibitem{WhW} E.\ T.\ Whittaker and G.\ N.\ Watson,
{\em A Course of Modern Analysis}, 4th ed.,
Cambridge University Press, Cambridge, 1962.

\end{thebibliography}
\end{document}